\documentclass[10pt,a4paper]{article}
\usepackage{amsmath}
\usepackage{amsfonts}
\usepackage{amssymb}
\usepackage{amsthm}
\usepackage{amscd} 
\usepackage[dvips]{graphicx}
\def\ui{{\mathbf i}} \def\ZZ{{\mathbb Z}} 
\def\f#1#2{\frac{#1}{#2}}  \def\exp#1{e^{#1}} 
\def\cb#1#2{  {\footnotesize \left( \!\!\bi{c} #1 \\ #2 \ei \!\!\right) } }
\def\ba#1{\left[\begin{array}{#1}}  \def\ea{\end{array}\right]}
 \def\fa{{\forall}}
\def\al{{\alpha}}  \def\de{{\delta}}

 \def\om{{\omega}}

\def\um{\frac{1}{2}}
\def\bi#1{\begin{array}{#1}}  \def\ei{\end{array}}
\def\beq{\begin{equation}} \def\eeq{\end{equation}}
\def\o{\overline}
   
\def\f#1#2{\frac{#1}{#2}}

\def\R#1{\sqrt{#1}} 

\def\addots{\mathinner{\mkern1mu\raise1pt\vbox{\kern7pt\hbox{.}}%
\mkern2mu
    \raise4pt\hbox{.}\mkern2mu\raise7pt\hbox{.}\mkern1mu}}

 \def\and{\hbox{and}}  
  
\def\diag{\hbox{\,diag\,}}  
  
 \def\mod{\hbox{\,mod\,}} 

\def\v{{\mathbf v}}  \def\x{{\mathbf x}}
\def\z{{\mathbf z}} \def\b{{\mathbf b}} \def\e{{\mathbf e}}
\def\a{{\mathbf a}}  \def\q{{\mathbf q}}
 \def\vf{{\mathbf f}} 
\def\w{{\mathbf w}} \def\u{{\mathbf u}}  
\def\c{{\mathbf c}} \def\d{{\mathbf d}}

\def\vn{{\mathbf 0}}  \def\ui{{\mathbf i}}

 \def\L{{\mathcal L}} 
  
  \def\I{{\mathcal I}}

  \def\NN{{\mathbb N}}
\def\ZZ{{\mathbb Z}}   
\def\CC{{\mathbb C}} \def\CCn{{\CC^n}} \def\CCnn{{\CC^{n\times n}}}
\def\RR{{\mathbb R}}   
\usepackage[width=16.6cm, left=2.2cm]{geometry}
\newtheoremstyle{teorema}{8pt}{8pt}{}{}{\bfseries}{}{8pt}{}
\theoremstyle{teorema}
\newtheorem{theorem}{Theorem}[section]
\newtheorem{proposition}[theorem]{Proposition}

\newtheorem{lemma}[theorem]{Lemma}

\newtheoremstyle{definizione}{8pt}{8pt}{\itshape}{}{\scshape}{}{8pt}{}
\theoremstyle{definizione}

\usepackage[usenames,dvipsnames]{color}

\title{Bernoulli, Ramanujan, Toeplitz and the triangular matrices}
\author{Carmine Di Fiore, Francesco Tudisco, Paolo Zellini\\
Dipartimento di Matematica, Universit\`a di Roma ``Tor Vergata''}
\begin{document}

\maketitle

\begin{abstract}
By using one of the definitions of the Bernoulli numbers,
we prove that they solve particular {\it odd} and {\it even} lower
triangular Toeplitz (l.t.T.) systems of equations.
In a paper Ramanujan writes down a sparse lower triangular
system solved by Bernoulli numbers; we observe that such system is
equivalent to a sparse l.t.T. system.
The attempt to obtain the sparse l.t.T. Ramanujan system
from the l.t.T. odd and even systems, has led us to study
efficient methods for solving generic l.t.T. systems. Such methods are
here explained in detail in case $n$, the number of equations, is a power
of $b$, $b=2,3$ and $b$ generic.
\end{abstract}

\indent \quad{\it Keywords:} Bernoulli numbers; triangular Toeplitz matrices

\indent \quad{\it 2010 MSC:} 11B68, 11Y55, 15A06, 15A09, 15A24, 15B05, 15B99, 65F05

\indent \quad{\it Corresponding author (email address):} difiore@mat.uniroma2.it

\section*{Introduction} 
\label{sec:introduction}

The $j$-th Bernoulli number, $B_{2j}(0)$, is a rational number defined for any
$j\in\NN$, positive if $j$ is odd and negative if $j$ is even, whose denominator is
known, in the sense that it is the product of all prime numbers $p$ such that $p-1$
divides $2j$ \cite{Caley}, and, instead, only partial information are known about the numerator
\cite{Bn_Numer_Wagstaff}, \cite{Bn_Numer_Kellner}, \cite{Stein}.
Shortly, $B_{2j}(0)$, $j\geq 1$, could be defined by the well known Euler formula
$B_{2j}(0)=(-1)^{j+1}\f{2(2j)!}{(2\pi)^{2j}}\sum_{k=1}^{+\infty} \f{1}{k^{2j}}$, involving
the Zeta-Riemann function \cite{Bn_EulForm_Apostol}, \cite{BookOnBernoulli}.
May be the latter formula alone is sufficient to justify the
past and present interest in investigating Bernoulli numbers (B.n.).
Note that an immediate consequence of the Euler formula is the fact that the
$B_{2j}(0)$ go to infinite as $j$ diverges.

In literature one finds several identities involving B.n., and also several
``explicit'' formulas for them, which may appear more explicit than Euler formula
since involve finite (instead of infinite) sums \cite{Bn_ExplForm_Gould}, \cite{Bn_Mazur19onArsConjANDapplicinPureMath}, \cite{Bn_Comput_Kaneko}, \cite{Bn_ExplForm_Bucur}, \cite{Bn_ExplForm_Vella}, \cite{Bn_ExplForm_Ahmed}, \cite{Bn_Comput_Brent}, \cite{Bn_HigherOrderRec_Takashi}. Some of such identities/formulas
have been used to define algorithms for the computation of the numerators of
the B.n.. It is however interesting to note that there are efficient algorithms for
such computations which exploit directly the expression of the B.n. in terms of the Zeta-Riemann function \cite{Bn_Comput_CalcbnZR_BernoulliOrg}, \cite{Bn_Comput_McGown}, \cite{Stein}, \cite{Harvey}. See also \cite{Bn_Comput_AkiTani}, \cite{Bn_Comput_Kaneko}, \cite{Bn_Comput_Riordan_Merlini}.
As it is noted in \cite{Bn_Mazur19onArsConjANDapplicinPureMath}, the B.n. appear in several fields of mathematics; in particular, the numerators of the B.n. and their factors play an important role in advanced number theory (see \cite{Bn_MazurCyclotomicFields}, \cite{Bn_MazurCyclotomicFieldsCorrections}, \cite{Bn_Numer_Wagstaff}, \cite{MazurSteinBook}, \cite{Bn_Numer_Kellner}).
So, wider and wider lists of the ``first'' B.n. have been and are compiled, and also lists of the known factors of their numerators. The updating of these lists requires the implementation of efficient primality-test/integer-factorization algorithms on
powerful parallel computers. For instance, by this way the numerator of $B_{200}(0)$ first has been proved not prime, and then has been factorized as the product of five
prime integers. Two of such factors, respectively of 90 and 115 digits, have been found only very recently \cite{B200}, \cite{FactBigIntegersTogether}.

A lower triangular Toeplitz (l.t.T.) matrix $A$ is a matrix such that $a_{ij}=0$ if
$i<j$, and $a_{i,j}=a_{i+1,j+1}$, for all $i,j$. The product of two l.t.T. matrices
whatever order is used generates the same matrix, and such matrix is l.t.T..
Non singular l.t.T. matrices have an inverse which is l.t.T., and thus is
uniquely defined by its first column. Such remarks simply
follow from the fact that the set of all l.t.T. matrices is nothing else than
the set $\{p(Z)\}$ of all polynomials in the {\it lower-shift} matrix $Z=(\de_{i,j+1})$, and the fact that $\{p(X)\}$ is, for any choice of $X$, a commutative matrix algebra closed under inversion.

Note that, given a $n\times n$ l.t.T. matrix $A$, multiplying $A$ by a vector (${\mathcal M}$), or solving a system whose coefficient matrix is $A$ (${\mathcal S}$), are both operations that can be performed in at most $O(n\log n)$ arithmetic operations, thus in an amount of operations
significantly smaller than, for example, the $n(n+1)/2$ multiplications required by
the standard algorithms for lower triangular (non Toeplitz) matrices.
Such performances are possible by introducing alternative algorithms
which exploit, first, the strict relationship between the Toeplitz structure and the discrete Fourier transform \cite{DFTcirc}, and second, the fast implementation, known as FFT, of the latter. However, for (${\mathcal M}$) and (${\mathcal S}$) it is not so clear what is the best possible alternative algorithm. In particular, the algorithms performing the multiplication l.t.T. matrix $\times$ vector hold unchanged if the l.t.T. is replaced by a generic (full) Toeplitz matrix; so one guesses that better algorithms may be introduced, ad hoc for the l.t.T. case.
Analogously, a widely known exact algorithm able to solve l.t.T. systems (or, more precisely, to compute the first column of the inverse of a l.t.T. matrix) in at most
$O(n\log n)$ a.o., has essentially a recursive character, which is not so convenient from the point of view of the space complexity \cite{ltT_exactsolver_Morf}.
In order to avoid such drawback, however, one could use approximation inverse algorithm \cite{ltT_approxsolver_Bini}, \cite{ltT_approxsolver_Ng}. See also \cite{ltT_TrenchInverse}, \cite{ltT_Trench2NearlyTriangularsyst}, \cite{ltT_TrenchInversBandToep}, \cite{ltT_BoundsInverses}, \cite{ltT_Bound1norm}, \cite{ltT_QuestionOnInverse}, and the references in \cite{ltT_approxsolver_Ng}.

In this paper we emphasize the connection (may be also noted elsewhere,
see f.i. \cite{CollegaFasino}) between
Bernoulli numbers and lower triangular Toeplitz matrices. This connection will
finally result into new possible algorithms for computing simultaneously the first $n$
Bernoulli numbers.
More precisely, we prove that the vector
$\z=(\, B_{2j}(0)x^{j}/(2j)! \,)_{j=0}^{+\infty}$, $x\in\RR$ ($B_0(0)=1$),
solves three {\it type I}\, l.t.T. semi-infinite linear systems $A\x=\vf$, named {\it even}, {\it odd} and {\it Ramanujan}, respectively.
To such systems correspond other three systems, of {\it type II},
solved by the vector $Z^T\z=(\, B_{2j}(0)x^{j}/(2j)! \,)_{j=1}^{+\infty}$.
Type I ad II l.t.T. systems have been obtained as follows:
\vskip0.2cm

- Introducing/considering three particular lower triangular systems solved by Bernoulli numbers. The first two, which we may call {\it almost-even} and {\it almost-odd}, are introduced by exploiting a well known power series expansion involving Bernoulli polynomials. It is interesting to note that the coefficient matrices of such systems are particular submatrices of the l.t. Tartaglia matrix. The third one, the {\it almost-Ramanujan} system, is simply deduced from the 11 equations, solved by the absolute values of the first 11 B.n., listed by Ramanujan in the paper \cite{Ramanujan}.
\vskip0.1cm

- Noting that the almost-even, almost-odd, and almost-Ramanujan systems are structured in such a way that their coefficient matrices can be forced to be Toeplitz. This result follows, for the first two systems, from the matrix series representation of the Tartaglia matrix in terms of powers of a kind of {\it regularly weighted} lower shift matrix, and, for the third one, by a remarkable remark proved in the $11\times 11$ case, and conjectured in the general case.
\vskip0.1cm

- Proving that each of the three l.t.T. systems so obtained (even, odd and Ramanujan), which is solved by $\z$ (or $Z^T\z$), can be manipulated so to define a correspondent l.t.T. system whose solution is $Z^T\z$ (or $\z$).
\vskip0.2cm

The Ramanujan l.t. system in \cite{Ramanujan} has the remarkable peculiarity to have two null diagonals alternating the nonnull ones. The same peculiarity is inherited by its Toeplitz version, obtained in this paper (see (\ref{typeII}), (\ref{typeR})).
For some time we have tried to obtain by linear algebra arguments the system in \cite{Ramanujan} as a consequence of our odd and even systems, also with the aim to learn a technique for introducing a system possibly more sparse than and as simple as the Ramanujan one and, above all, its Toeplitz version. In order to do that, first of all it was necessary to nullify the second, the third, the fifth, the sixth, the eigth, the nineth, and so on, diagonals of our odd and even systems. At that time
we conceived the idea of a fast direct (not recursive) solver of l.t.T. systems.
In fact, the process of making null two diagonals every one, could
be repeated, so to finally transform the initial l.t.T. into the identity matrix.
Moreover, each step of such sort of Gaussian elimination procedure could be realized by
a left multiplication by a suitable l.t.T. matrix.
These remarks led us to conceive a $O(n\log_3 n)$ solver of l.t.T. systems
$A\x=\vf$ where $A$ is $n\times n$ with $n=3^s$, and then to extend the result,
obtaining analogous low complexity algorithms, ad hoc for the cases $n=b^s$, $b=2$ and $b$ generic.
Such exact algorithms are described in the present paper in detail, since we believe
that, for their not recursive character and for their clearness, they could be competitive with any known $O(n\log n)$ l.t.T. systems solver \cite{ltT_approxsolver_Bini}, \cite{ltT_approxsolver_BiniCalcolo}, \cite{ltT_approxsolver_Ng}, \cite{ltT_exactsolverCommengesMonsion}, \cite{ltT_exactsolver_BiniPan}, \cite{ltT_exactsolver_Morf}, \cite{ltT_solverMurphy}, \cite{ltT_TrenchInverse}.

In particular, as a first test, the $3^s\times 3^s$-algorithm could be applied to the Toeplitz versions (\ref{typeI}), (\ref{typeII}), (\ref{typeR}) of the Ramanujan system (\ref{ramanujan-system}) \cite{Ramanujan}, in order to compute the vector $\{\z\}_n$ that contains the first $n=3^s$ Bernoulli numbers in at most $O(n\log_3 n)$ a.o. (assuming already computed the entries of $A$ and $\vf$). Note that the first step of the algorithm can be in this case skipped, as it has been already performed explicitly by Ramanujan.

\section{Lower triangular Toeplitz matrices (l.t.T.)}

Let $Z$ be the following  $n\times n$ matrix
$$
    Z = \ba{ccccc}
           0 &  &  &   & \\
           1 &  &  &   & \\
             & 1&  &   & \\
             &  &\cdot& & \\
             &  &  & 1 & 0 \ea .
$$
$Z$ is usually called {\it lower-shift} due to the effect that its multiplication by a vector  $\v=[v_0\,v_1\,\cdots\,v_{n-1}]^T\in\CCn$ produces: $Z\v=[0\, v_0\, v_1\,\cdots\,v_{n-2}]^T$.
Let $\L$ be the subspace of $\CCnn$ of those matrices which commute with $Z$. It is simple to observe that $\L$ is a matrix algebra closed under inversion, that is if $A,B\in\L$ then $AB\in\L$ and if $A\in\L$ is nonsingular then $A^{-1}\in\L$. Let us investigate the structure of the matrices in $\L$. Let $A\in\CCnn$. Then
$$
    AZ = \ba{cccc}
           a_{12} & \cdot & a_{1n} & 0 \\
           \vdots &       & \vdots & \vdots \\
           a_{n2} & \cdot & a_{nn} & 0 \ea,\ \
    ZA = \ba{ccc}
            0     & \cdots & 0 \\
           a_{11} & \cdots & a_{1n}  \\
           \cdot &       & \cdot  \\
           a_{n-11} & \cdots & a_{n-1n} \ea .
$$
Forcing the equality between $AZ$ and $ZA$ we obtain the conditions
$a_{12}=a_{13}=\ldots a_{1n}=a_{2n}=\ldots a_{n-1,n}=0$ and
$a_{i,j+1}=a_{i-1,j}$, $i=2,\ldots,n$, $j=1,\ldots,n-1$, from which one deduces the structure of $A\in\L$: $A$ must be a {\it lower triangular Toeplitz} (l.t.T.) matrix, i.e. of the type
\begin{equation}\label{dots}
	A = \ba{ccccc}
           a_{11} &        &       &  & \\
           a_{21} & a_{11} &       &    & \\
           a_{31} & a_{21} & a_{11}&  & \\
           \cdot  &        & \cdot & \cdot &  \\
           a_{n1} & \cdot  & \cdot & a_{21} & a_{11} \ea .
\end{equation}
It follows that $\dim\L=n$ and that, by a well known general result \cite{Lancaster}, $\L$ can be represented as the set of all polynomials in $Z$, i.e. $\L=\{p(Z):\, p=\hbox{polynomials}\}$ .
Actually, by investigating the powers of $Z$ one realizes that the matrix $A$ in \eqref{dots} is exactly the polynomial $\sum_{k=1}^n a_{k1}Z^{k-1}$.

Note also that, as a consequence of the above arguments, the inverse of a l.t.T. matrix is still l.t.T., thus it is completely determined as soon as its first column is known.

In the next section we will illustrate an efficient algorithm for the solution of a lower triangular Toeplitz linear system  $A\x=\vf$, $A\in\L$, where $n=2^s$ . We will show that such operation can be realized trough $O(\log_2 n)$ matrix-vector products, where the matrices involved are l.t.T. and their dimension is $2^j\times 2^j$, with $j=2,\ldots,s$.
Since such products require no more than $c j2^j$ arithmetic operations (see Appendices A, B) the overall complexity of the proposed algorithm is
$O(n \log_2 n)$.


\section{An algorithm for the solution of a lower triangular Toeplitz linear system of $n$ equations, where $n$ is a power of $2$} 
\label{sec:algorithm}

In this section we present an algorithm of complexity $O(n \log_2 n)$ for the computation of $\x$ such that  $A\x=\vf$, where $A$ is a $n\times n$ lower triangular Toeplitz matrix, with $n$ power of $2$ and $[A]_{11}=1$.

\subsection{Preliminary Lemmas}

Given a vector $\v=[v_0\ v_1\ v_2\ \cdots\,]^T$, $v_i\in\CC$ (briefly $\v\in\CC^\NN$), let $L(\v)$ be the semi-infinite lower triangular Toeplitz matrix whose first column is $\v$, i.e.
$$
             L(\v) = \sum_{k=0}^{+\infty} v_k Z^k,\ \
             Z = \ba{cccc}
                   0 &   &       & \\
                   1 & 0 &       & \\
                     & 1 & 0     & \\
                     &   & \cdot & \cdot \ea .
$$
\begin{lemma}\label{lemma1}
	Let $\a$, $\b$, $\c$ be  vectors in $\CC^\NN$. Then
	$L(\a)L(\b)=L(\c)$ if and only if $L(\a)\b=\c$.
\end{lemma}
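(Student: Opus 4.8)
The plan is to reduce everything to the single observation that the vector $\b$ is precisely the first column of $L(\b)$, so that both of the conditions in the statement turn out to be descriptions of the first column of the product $L(\a)L(\b)$.

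First I would recall that if $\e = [1\ 0\ 0\ \cdots]^T$ denotes the first unit vector, then $Z^k\e$ is the $k$-th unit vector, whence $L(\b)\e = \sum_{k=0}^{+\infty} b_k Z^k \e = \b$; that is, $\b$ is the first column of $L(\b)$, and in the same way $\c$ is the first column of $L(\c)$. Thus the correspondence $\v\mapsto L(\v)$ is injective, since one recovers $\v$ simply as the first column of $L(\v)$.

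Next I would invoke the fact, established in Section~1 (and valid verbatim in the semi-infinite setting, because every entry of a product of lower triangular matrices is a \emph{finite} sum), that the product of two l.t.T. matrices is again l.t.T.. Hence $L(\a)L(\b)=L(\d)$ for a unique $\d\in\CC^\NN$, namely the first column of the product. But that first column is
$$
   L(\a)L(\b)\,\e \;=\; L(\a)\bigl(L(\b)\e\bigr)\;=\;L(\a)\b,
$$
so $\d=L(\a)\b$. Combining these two steps with the injectivity of $L$, the equality $L(\a)L(\b)=L(\c)$, i.e. $L(\d)=L(\c)$, holds if and only if $\d=\c$, i.e. $L(\a)\b=\c$, which is exactly the asserted equivalence.

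I expect no serious obstacle. The only point deserving a word of care is the well-definedness of the semi-infinite product $L(\a)L(\b)$ and of $L(\a)\b$; both are guaranteed by the lower triangular structure, which forces the $m$-th component of either expression to be the finite convolution $\sum_{j=0}^{m} a_j b_{m-j}$, so that no convergence issues arise and the two computations visibly agree.
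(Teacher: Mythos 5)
Your proof is correct and follows essentially the same route as the paper's: both arguments rest on the facts that the product of l.t.T. matrices is l.t.T., that the first column of $L(\a)L(\b)$ is $L(\a)\b$, and that an l.t.T. matrix is uniquely determined by its first column. Your phrasing via injectivity of $\v\mapsto L(\v)$ merely packages the paper's two directions into a single equivalence, and your remark on the finiteness of the convolution sums in the semi-infinite setting is a harmless (and welcome) extra precaution.
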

\begin{proof}
	If $L(\a)L(\b)=L(\c)$, then the first column of $L(\a)L(\b)$ must be equal to the first column of $L(\c)$, and these are the vectors $L(\a)\b$ and $\c$, respectively. Conversely, assume that $L(\a)\b=\c$ and consider the matrix $L(\a)L(\b)$. It is lower triangular Toeplitz being a product of lower triangular Toeplitz matrices, and, by hypothesis, its first column $L(\a)\b$ coincides with the vector $\c$, which in turn is the first column of the lower triangular Toeplitz matrix $L(\c)$. The thesis follows from the fact that l.t.T. matrices are uniquely defined by their first columns.
\end{proof}

Given a vector  $\v=[v_0\,v_1\,v_2\,\cdots]^T\in\CC^\NN$, let $E$ be the semi-infinite matrix with entries $0$ or $1$, which maps $\v$ into the vector
$E\v=[v_0\, 0\, v_1\, 0\, v_2\, 0\, \cdots]^T$:
$$
     E = \ba{cccc}
            1 &  &  &    \\
            0 &  &  &    \\
            0 & 1&  &    \\
            0 & 0&  &    \\
            0 & 0& 1&    \\
           \cdot & \cdot & \cdot & \cdot \ea.
$$
In other words, the application of $E$ to $\v$ has the effect of inserting a zero between two consecutive components of $\v$. It is easy to observe that
$$
    E^2 = \ba{cccc}
           1 &  &  &  \\
           0 &  &  &  \\
           0 &  &  &  \\
           0 &  &  &  \\
           0 & 1&  &  \\
           0 & 0&  &  \\
           0 & 0&  &  \\
           0 & 0&  &  \\
           0 & 0& 1&  \\
           \cdot & \cdot & \cdot & \cdot \ea,\ \
    E^s = \ba{cccc}
            1   &    &  &    \\
            \vn &    &  &    \\
            0   & 1  &  &    \\
            \vn & \vn&  &    \\
            0   & 0  & 1&    \\
           \cdot & \cdot & \cdot & \cdot \ea,\ \vn=\vn_{2^s-1},
$$
that is the application of  $E^s$ to $\v$ has the effect of inserting  $2^s-1$ zeros between two consecutive components of $\v$.
\begin{lemma}\label{lemma2}Let $\u$ and $\v$ be vectors in $\CC^\NN$ with $u_0=v_0=1$. Then
	$L(E\u)E\v = EL(\u)\v$, and, more in general, for each $s\in\NN$,
	$L(E^s\u)E^s\v = E^sL(\u)\v$.
\end{lemma}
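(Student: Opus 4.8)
The plan is to reduce the whole statement to a single intertwining relation between the zero-insertion operator $E$ and the shift $Z$, namely $EZ=Z^2E$; this is the operator-level form of the informal fact that inserting a zero between consecutive entries turns one shift into two. Granting it, the lemma becomes a short computation with power series in $Z$, and no analytic subtlety arises because every matrix in play is lower triangular, so that each entry of any product, applied to a vector, is a finite sum.

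I would first record two structural facts. Writing $L(\w)=\sum_{k\ge 0}w_kZ^k$ for $\w\in\CC^\NN$ and using $(E\u)_{2k}=u_k$, $(E\u)_{2k+1}=0$, one reads off
$$L(E\u)=\sum_{k\ge 0}u_kZ^{2k}.$$
Next I would verify $EZ=Z^2E$ by letting both sides act on an arbitrary $\v$: each yields $[0,0,v_0,0,v_1,0,\ldots]^T$, the check being $(EZ\v)_{2k}=(Z\v)_k=v_{k-1}$ and $(Z^2E\v)_{2k}=(E\v)_{2k-2}=v_{k-1}$ (with $v_{-1}:=0$), all odd-indexed entries being $0$. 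An easy induction on $k$ then gives $EZ^k=Z^{2k}E$, whence
$$EL(\u)=E\sum_{k\ge 0}u_kZ^k=\sum_{k\ge 0}u_kZ^{2k}E=L(E\u)E;$$
applying both sides to $\v$ proves $L(E\u)E\v=EL(\u)\v$.

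For the general case I would simply iterate. Since $E^s$ inserts $2^s-1$ zeros, $(E^s\u)_{2^sk}=u_k$ and the other entries vanish, so $L(E^s\u)=\sum_{k\ge 0}u_kZ^{2^sk}$; and $EZ^k=Z^{2k}E$ gives, by induction on $s$, the relation $E^sZ^k=Z^{2^sk}E^s$. The identical one-line computation, with $2$ replaced by $2^s$, yields $E^sL(\u)=L(E^s\u)E^s$, hence $L(E^s\u)E^s\v=E^sL(\u)\v$.

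The only place demanding care is the verification of $EZ=Z^2E$ together with the index arithmetic underlying $L(E^s\u)=\sum_k u_kZ^{2^sk}$; everything else is formal. I would also note that the hypotheses $u_0=v_0=1$ are not actually needed — the identities hold for all $\u,\v\in\CC^\NN$ — and are presumably stated only because this normalized setting is the one used in the algorithm that follows.
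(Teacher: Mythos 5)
Your proof is correct, but it takes a genuinely different route from the paper's. The paper proves the case $s=1$ by direct inspection of the two vectors $L(E\u)E\v$ and $EL(\u)\v$ (an entrywise check left implicit), and then gets general $s$ by bootstrapping: left-multiplying the identity by $E$ and re-applying it with $E\u$, $E\v$ in place of $\u$, $\v$ yields the case $s=2$, ``and so on.'' You instead isolate the operator-level intertwining relation $EZ=Z^2E$ (and by induction $E^sZ^k=Z^{2^sk}E^s$) together with the representation $L(E^s\u)=\sum_{k\ge 0}u_kZ^{2^sk}$, from which the stronger \emph{matrix} identity $E^sL(\u)=L(E^s\u)E^s$ follows by a one-line power-series computation; the lemma is then just this identity applied to $\v$. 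Your organization buys several things: the key step is an explicit algebraic relation rather than an unspecified inspection; you obtain the identity at the level of operators, not merely of their action on one vector; you correctly note that the hypothesis $u_0=v_0=1$ is superfluous (both sides are bilinear in $(\u,\v)$ and the normalization is never used); and the argument transfers verbatim to the base-$b$ insertion operators of Section 4 and Appendix C, where the relation becomes $EZ=Z^bE$ --- precisely the cases the paper dispatches with ``proceed as in the case $n$ power of $2$.'' What the paper's route buys is brevity: once the $s=1$ check is accepted, the iteration requires no further computation, whereas your argument needs the (easy but real) index bookkeeping behind $EZ=Z^2E$ and $L(E^s\u)=\sum_{k}u_kZ^{2^sk}$.
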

\begin{proof}
	By inspecting the vectors $L(E\u)E\v$ and $EL(\u)\v$ one observes that they are equal. By multiplying $E$ on the left of the identity $L(E\u)E\v = EL(\u)\v$ and using the same identity also for the vectors $E\u$ and $E\v$, in place of $\u$ and $\v$ respectively, one observes that it also holds $L(E^2\u)E^2\v = E^2L(\u)\v$. And so on.
\end{proof}

\subsection{The algorithm}
Let $A$ be a $n\times n$ l.t.T. matrix, with $n$ power of $2$ and $[A]_{11}=1$. Assume we want to solve the system $A\x=\vf$. The algorithm presented below exploits the fact that $A^{-1}$ is still a $n\times n$ l.t.T. matrix.

\begin{itemize}
\item[1.]
Compute the first column of the l.t.T. matrix $A^{-1}$ by solving the particular linear system $A\x=\e_1$ via the algorithm \eqref{alg-prima-riga} of complexity $O(n\log_2 n)$ shown in the next section, based upon Lemmas \ref{lemma1}, \ref{lemma2} and their repeated application.
\item[2.]
Compute the l.t.T. matrix-vector product $A^{-1}\vf$ with no more than $O(n\log_2 n)$ arithmetic operations (see Appendices A and B).
\end{itemize}

\subsection{The computation of the first column of the inverse of a $n\times n$ l.t.T. matrix, where $n$ is a power of $2$}
For the sake of readability here we present the algorithm for the computation of $\x$ such that $A\x=\e_1$ in the particular case $n=8$. When suitable we briefly discuss the general case $n=2^s$, $s\in\NN$; nevertheless such case can be easily deduced from the considered one, and is reported in detail in Appendix C.

The algorithm consist of two parts. In the first one particular l.t.T. matrices are introduced and computed, with the property that their successive left multiplication by the matrix $A$ transforms $A$ into the the identity matrix.
In the second part such matrices are successively left multiplied by the vector $\e_1$.
As it will be clear throughout what follows, the method is nothing more than a kind of Gaussian elimination, where diagonals are nullified instead of columns. The overall cost of $O(n \log_2 n)$ comes from the fact that at each step of the first part a half of the remaining non null diagonals are nullified, and from the fact that in the second part the computations can be simplified by exploiting the structure of $\e_1$, which has only one nonzero component.

First of all observe that the $8\times 8$ matrix $A$ can be thought as the upper-left submatrix of a semi-infinite l.t.T. matrix $L(\a)$, whose first column is
$[1\, a_1\, a_2\,\cdot\, a_7\, a_8\,\cdot\,]^T$.\\[0.2cm]
{\bf Step 1.} Look for $\hat\a$ such that
$$
   L(\a)\hat\a
   = \ba{ccccccccc}
      1     &      &      &      &      &      &      &      &      \\
      a_1   & 1    &      &      &      &      &      &      &      \\
      a_2   & a_1  &  1   &      &      &      &      &      &      \\
      a_3   & a_2  & a_1  &  1   &      &      &      &      &      \\
      a_4   & a_3  & a_2  &  a_1 &  1   &      &      &      &      \\
      a_5   & a_4  & a_3  &  a_2 &  a_1 &   1  &      &      &      \\
      a_6   & a_5  & a_4  &  a_3 &  a_2 & a_1  &  1   &      &      \\
      a_7   & a_6  & a_5  &  a_4 &  a_3 & a_2  & a_1  &  1   &      \\
      \cdot & \cdot& \cdot& \cdot& \cdot& \cdot& \cdot& \cdot& \cdot\ea
     \ba{c}
      1 \\
      \hat a_1 \\
      \hat a_2 \\
      \hat a_3 \\
      \hat a_4 \\
      \hat a_5 \\
      \hat a_6 \\
      \hat a_7 \\
      \cdot \ea
 =  \ba{c}
      1 \\
      {\it 0} \\
      a_1^{(1)} \\
      {\it 0} \\
      a_2^{(1)} \\
      {\it 0} \\
      a_3^{(1)} \\
      {\it 0} \\
      \cdot \ea
= E\a^{(1)}
$$
for some $a_i^{(1)}\in\CC$, and compute such $a_i^{(1)}$. The computation of
$a_i^{(1)}$ requires, once $\hat\a$ is known, one l.t.T. $8\times 8$ ($2^s\times 2^s$)  matrix-vector product $-$ or, more precisely, two l.t.T. $4\times 4$ ($2^{s-1}\times 2^{s-1}$) matrix-vector products. We will see that $\hat \a$ is actually available with no computations.

Note that, due to Lemma \ref{lemma1}, we have  $L(\hat\a)L(\a)=L(E\a^{(1)})$, that is the l.t.T. matrix $L(\a)$ is transformed into a l.t.T. matrix which alternates to each nonnull diagonal a null one.\\[0.2cm]
{\bf Step 2.} Look for $\hat\a^{(1)}$ such that
$$
   L(E\a^{(1)})E\hat\a^{(1)}
   = \ba{ccccccccc}
      1     &      &      &      &      &      &      &      &      \\
      0     & 1    &      &      &      &      &      &      &      \\
      a_1^{(1)}   & 0    &  1   &      &      &      &      &      &      \\
      0     & a_1^{(1)}  &  0   &  1   &      &      &      &      &      \\
      a_2^{(1)}   & 0    & a_1^{(1)}  &  0   &  1   &      &      &      &      \\
      0     & a_2^{(1)}  & 0    &  a_1^{(1)} &  0   &   1  &      &      &      \\
   a_3^{(1)}   & 0    & a_2^{(1)}  &  0   &  a_1^{(1)} &   0  &  1   &      &      \\
   0     & a_3^{(1)}  & 0    &  a_2^{(1)} &  0   & a_1^{(1)}  &  0   &  1   &      \\
      \cdot & \cdot& \cdot& \cdot& \cdot& \cdot& \cdot& \cdot& \cdot\ea
     \ba{c}
      1 \\
      0 \\
      \hat a_1^{(1)} \\
      0 \\
      \hat a_2^{(1)} \\
      0 \\
      \hat a_3^{(1)} \\
      0 \\
      \cdot \ea
 =  \ba{c}
      1 \\
      0 \\
      {\it 0} \\
      0 \\
      a_1^{(2)} \\
      0 \\
      {\it 0} \\
      0 \\
      \cdot \ea
= E^2\a^{(2)}
$$
for some $a_i^{(2)}\in\CC$, and compute such $a_i^{(2)}$. The computation of
$a_i^{(2)}$ requires, once $\hat\a^{(1)}$ is known, one l.t.T. $4\times 4$ ($2^{s-1}\times 2^{s-1}$)  matrix-vector product $-$ or, more precisely, two l.t.T. $2\times 2$ ($2^{s-2}\times 2^{s-2}$) matrix-vector products.

Note that, due to Lemma \ref{lemma1}, we have  $L(E\hat\a^{(1)})L(E\a^{(1)})=L(E^2\a^{(2)})$, that is the l.t.T. matrix $L(\a)$ is transformed into a l.t.T. matrix which alternates to each nonnull diagonal three null ones.

Also note that, due to Lemma \ref{lemma2}, if $L(\a^{(1)})\hat\a^{(1)}=E\a^{(2)}$ then $L(E\a^{(1)})E\hat\a^{(1)}= E^2\a^{(2)}$. We will see that $\hat\a^{(1)}$ such that $L(\a^{(1)})\hat\a^{(1)}=E\a^{(2)}$ is actually available with no computations.\\[0.2cm]
{\bf Step 3.}  Look for  $\hat\a^{(2)}$ such that
$$
   L(E^2\a^{(2)})E^2\hat\a^{(2)}
   = \ba{ccccccccc}
      1     &      &      &      &      &      &      &      &      \\
      0     & 1    &      &      &      &      &      &      &      \\
      0     & 0    &  1   &      &      &      &      &      &      \\
      0     & 0    &  0   &  1   &      &      &      &      &      \\
      a_1^{(2)} & 0&  0   &  0   &  1   &      &      &      &      \\
      0     & a_1^{(2)}  & 0    &  0 &  0   &   1  &      &      &      \\
      0     & 0    & a_1^{(2)}  &  0   &  0 &   0  &  1   &      &      \\
      0     & 0  & 0    &  a_1^{(2)} &  0   & 0  &  0   &  1   &      \\
      \cdot & \cdot& \cdot& \cdot& \cdot& \cdot& \cdot& \cdot& \cdot\ea
     \ba{c}
      1 \\
      0 \\
      0 \\
      0 \\
      \hat a_1^{(2)} \\
      0 \\
      0 \\
      0 \\
      \cdot \ea
 =  \ba{c}
      1 \\
      0 \\
      0 \\
      0 \\
      {\it 0} \\
      0 \\
      0 \\
      0 \\
      \cdot \ea
= E^3\a^{(3)}
$$
for some $a_i^{(3)}\in\CC$, and compute such $a_i^{(3)}$. The computation of
$a_i^{(3)}$ requires, once $\hat\a^{(2)}$ is known, one l.t.T. $2\times 2$ ($2^{s-2}\times 2^{s-2}$)  matrix-vector product $-$ or, more precisely, two l.t.T. $1\times 1$ ($2^{s-3}\times 2^{s-3}$) matrix-vector products. That is, no operation in our case $n=8$, where no entry $a_i^{(3)}$, $i\geq 1$, is needed.

Note that, due to Lemma \ref{lemma1}, we have  $L(E^2\hat\a^{(2)})L(E^2\a^{(2)})=L(E^3\a^{(3)})$, that is the l.t.T. matrix $L(\a)$ is transformed into a l.t.T. matrix which alternates to each nonnull diagonal seven null ones.

Also note that, due to Lemma \ref{lemma2}, if $L(\a^{(2)})\hat\a^{(2)}=E\a^{(3)}$ then $L(E^2\a^{(2)})E^2\hat\a^{(2)}= E^3\a^{(3)}$. We will see that $\hat\a^{(2)}$ such that $L(\a^{(2)})\hat\a^{(2)}=E\a^{(3)}$ is actually available with no computations.\\[0.2cm]
Proceed this way, if $n=2^s>8$. Otherwise stop, the first part of the algorithm is complete.
\vskip0.2cm
Summarizing, we have proved that
\begin{equation}\label{alg-prima-riga}
	L(E^2\hat\a^{(2)}) L(E\hat\a^{(1)}) L(\hat\a)L(\a) = L(E^3\a^{(3)})
\end{equation}
where the upper left $8 \times 8$ submatrices of $L(\a)$ and of $L(E^3\a^{(3)})$ are the initial lower triangular Toeplitz matrix $A$ and the identity matrix, respectively:
$$
    L(\a) = \ba{cccccc}
             1   &     &     &    &   & \\
             a_1 & 1   &     &    &   & \\
            \cdot&\cdot&\cdot&    &   & \\
             a_7 &\cdot& a_1 & 1  &   & \\
             a_8 & a_7 &\cdot& a_1& 1 & \\
            \cdot&\cdot&\cdot&\cdot&\cdot&\cdot\ea,\ \
L(E^3\a^{(3)}) =  \ba{cccccc}
             1   &     &     &    &   & \\
             0   & 1   &     &    &   & \\
            \cdot&\cdot&\cdot&    &   & \\
             0   &\cdot& 0   & 1  &   & \\
             a_1^{(3)} & 0   &\cdot& 0  & 1 & \\
            \cdot&\cdot&\cdot&\cdot&\cdot&\cdot\ea \, .
$$
The operations we did so far are: $8\times 8$ l.t.T. $\cdot$ vector $+$
$4\times 4$ l.t.T. $\cdot$ vector
(if $A$ were $n\times n$ with $n=2^s$ the operations required would have been:  $2^s\times 2^s$ l.t.T. $\cdot$ vector $+\ldots+$
$4\times 4$ l.t.T. $\cdot$ vector).

Now let us move to our main purpose, compute the first column of $A^{-1}$, and thus let us show the second part of the algorithm.

Consider the following semi-infinite linear system:
\begin{equation}\label{semiinf-system}
	 L(\a) \z = E^2 \v
\end{equation}
where $\v$ is a generic semi-infinite vector in $\CC^\NN$
(if $A$ is $n\times n$ with $n=2^s$, then the matrix $E$ in \eqref{semiinf-system} must  be raised to the power $s-1$ rather than $2$).
Such system can be rewritten as follows
$$
     \ba{cc}
        A  &  O  \\
        \vdots & \ddots \ea
     \ba{c}
        \{\z\}_8 \\
          z_8 \\
          \cdot \ea
    = \ba{c}
         v_0 \\ 0 \\ 0 \\ 0 \\ v_1 \\ 0 \\ 0 \\ 0 \\ v_2 \\ \cdot \ea
$$
that is, pointing out the upper part of the system, consisting of only $8$ equations.
Before proceeding further, let us note that $\{\z\}_8$ is such that $A \{\z\}_8 = [ v_0 \ 0 \ 0 \ 0 \ v_1 \ 0 \ 0 \ 0 ]^T$, $v_0,v_1\in\CC$. Therefore the choices $v_0=1$ and $v_1=0$, would make $\{\z\}_8$ equal to  the vector we are looking for, $A^{-1}\e_1$.

By using the identity \eqref{alg-prima-riga} one immediately observes that the system $L(\a) \z = E^2 \v$ is equivalent to the following one
$$
  \ba{cc}
     I_8  &  O \\
     \vdots & \ddots \ea
  \ba{c}
     \{\z\}_8 \\
     \vdots \ea
  = L(E^3\a^{(3)}) \z
= L(\hat\a) L(E\hat\a^{(1)}) L(E^2\hat\a^{(2)}) E^2\v
$$
Due to Lemma \ref{lemma2} we can rewrite the right hand side in a more convenient way:
$$
  L(\hat\a) L(E\hat\a^{(1)}) L(E^2\hat\a^{(2)}) E^2\v
  = L(\hat\a) L(E\hat\a^{(1)}) E^2 L(\hat\a^{(2)}) \v
  = L(\hat\a) E L(\hat\a^{(1)}) E L(\hat\a^{(2)}) \v .
$$
Therefore, the following identity holds:
$$
  \ba{cc}
     I_8  &  O \\
     \vdots & \ddots \ea
  \ba{c}
     \{\z\}_8 \\
     \vdots \ea
  = L(\hat\a) E L(\hat\a^{(1)}) E L(\hat\a^{(2)}) \v .
$$
All the matrices involved on the right hand side are lower triangular. Moreover, the upper left square submatrices of $E$ of dimensions $8\times 8$, $4\times 4$ have half of its columns null, for example
$$
     \{E\}_4 = \ba{cc|cc}
                    1 & 0 & 0 & 0  \\
                    0 & 0 & 0 & 0  \\
                    0 & 1 & 0 & 0  \\
                    0 & 0 & 0 & 0  \ea ,\ \
     \{E\}_8 = \ba{cccc|cccc}
                    1 & 0 & 0 & 0 & 0 & 0 & 0 & 0 \\
                    0 & 0 & 0 & 0 & 0 & 0 & 0 & 0 \\
                    0 & 1 & 0 & 0 & 0 & 0 & 0 & 0 \\
                    0 & 0 & 0 & 0 & 0 & 0 & 0 & 0 \\
                    0 & 0 & 1 & 0 & 0 & 0 & 0 & 0 \\
                    0 & 0 & 0 & 0 & 0 & 0 & 0 & 0 \\
                    0 & 0 & 0 & 1 & 0 & 0 & 0 & 0 \\
                    0 & 0 & 0 & 0 & 0 & 0 & 0 & 0 \ea .
$$
These two observations let us obtain an effective representation of $\{\z\}_8$:
$$
  \{\z\}_8 = \{ L(\hat\a) \}_8 \{ E \}_8 \{ L(\hat\a^{(1)}) \}_8
              \{ E \}_8 \{ L(\hat\a^{(2)}) \}_8 \{ \v \}_8
           = \{ L(\hat\a) \}_8 \{ E \}_{8,4} \{ L(\hat\a^{(1)}) \}_4
              \{ E \}_{4,2} \{ L(\hat\a^{(2)}) \}_2 \{ \v \}_2.
$$
By using such formula, when $v_0=1$, $v_1=0$, the vector $\{\z\}_8$ can be computed by  performing the operations
$4\times 4$ l.t.T. $\cdot$ vector $+$
$8\times 8$ l.t.T. $\cdot$ vector
(if $A$ is $n\times n$ with $n=2^s$ the operations required would have been $4\times 4$ l.t.T. $\cdot$ vector  $+\ldots+$
$2^s\times 2^s$ l.t.T.$\cdot$ vector),
that is,  as many operations as the {\it Gaussian elimination}, the first part of the algorithm.

In conclusion, if $cj2^j$  is an upper bound for the cost of the product $2^j\times 2^j$ l.t.T. $\cdot$ vector, then the overall cost of the shown algorithm is $\tilde c\sum_{j=2}^s j2^j = O(s2^s) = O(n\log_2 n)$ for an $n \times n$ matrix $A$ with $n = 2^s$.

We still have to prove that the vector $\hat \a$ such that  $L(\a)\hat\a=E\a^{(1)}$ is indeed available with no computations.  To this aim it is sufficient to
observe that  {\small
\begin{equation}\label{hat_a}
	\ba{ccccccccccc}
	     1   &       &     &     &   &  &  & & & & \\
	     a_1 & 1     &     &     &   &  &  & & & & \\
	     a_2 & a_1   & 1   &     &   &  &  & & & & \\
	     a_3 & a_2   & a_1 & 1   &   &  &  & & & & \\
	     a_4 & a_3   & a_2 & a_1 & 1 &  &  & & & & \\
	     a_5 & a_4   & a_3 & a_2 & a_1 & 1 &  & & & &  \\
	     a_6 & a_5   & a_4 & a_3 & a_2 & a_1 & 1 &  & & & \\
	     a_7 & a_6 & a_5   & a_4 & a_3 & a_2 & a_1 & 1 & & & \\
	     a_8 & a_7 & a_6 & a_5   & a_4 & a_3 & a_2 & a_1 & 1 &  & \\
	     a_9 & a_8 & a_7 & a_6 & a_5   & a_4 & a_3 & a_2 & a_1 & 1 &  \\
 \cdot & \cdot & \cdot & \cdot & \cdot & \cdot & \cdot & \cdot & \cdot & \cdot & \cdot \ea
	    \ba{c}
	    1 \\ -a_1 \\ a_2 \\ -a_3 \\ a_4 \\ -a_5 \\ a_6 \\ -a_7 \\ a_8 \\ -a_9 \\ \cdot \ea
	   = \ba{c}
	        1 \\ 0 \\ 2a_2-a_1^2 \\ 0 \\
	        2a_4-2a_1a_3+a_2^2 \\ 0 \\
	        2a_6-2a_1a_5+2a_2a_4-a_3^2 \\ 0 \\
	        2a_8-2a_1a_7+2a_2a_6-2a_3a_5+a_4^2 \\ 0 \\ \cdot \ea ,
\end{equation}
$$
   L(\a) \big( \e_1+\sum_{i=1}^{+\infty} (-1)^i a_i\e_{i+1}\big)
       = \e_1+ \sum_{i=1}^{+\infty} \de_{i=0\mod 2}
              \big(2a_i + \sum_{j=1}^{i-1}(-1)^j a_j a_{i-j} \big)\e_{i+1} .
$$}
This can be verified by a direct calculation.

\subsection{Observations on the algorithm's core}
Given the vector $\a$ the problem of the computation of $\hat \a$ such that $L(\a)\hat \a = E\a^{(1)}$, for some $\a^{(1)}$ is indeed a polynomial arithmetic problem. In fact, due to Lemma \ref{lemma1}, the identity $L(\a)\hat \a = E\a^{(1)}$ is equivalent to the equality $L(\a)L(\hat \a) = L(E\a^{(1)})$, i.e.
$$
    (\, \sum_{k=0}^{+\infty} a_k Z^k \,)
    (\, \sum_{k=0}^{+\infty} \hat a_k Z^k \,)
  = \sum_{k=0}^{+\infty} a_k^{(1)} Z^{2k} .
$$
Therefore the polynomial arithmetic problem can be stated as follows:
\begin{itemize}
	\item[] given $a(z)=\sum_{k=0}^{+\infty} a_kz^k$, find a polynomial $\hat a(z)=\sum_{k=0}^{+\infty} \hat a_k z^k$ such that
	$$
	    \hat a(z) a(z) = a_0^{(1)}+a_1^{(1)}z^2+a_2^{(1)}z^4+\ldots =: a^{(1)}(z)
	$$
	for some coefficients $a_i^{(1)}$.
\end{itemize}
Such problem is a particular case of the more general problem: transform a \emph{full} polynomial $a(z)$ into a \emph{sparse} polynomial $a^{(1)}(z)=\sum_{k=0}^{+\infty} a_k^{(1)}z^{rk}$, for a fixed $r \in \NN$. It is possible to describe explicitly a polynomial $\hat a(z)$ that realizes such transformation, in fact the following theorem holds
\begin{theorem}\cite{BiniPC}\label{teo:polinomio}
	Given $a(z)=\sum_{k=0}^{+\infty} a_kz^k$, set
	$\hat a(z) = a(zt)a(zt^2)\cdots a(zt^{r-1})$ where $t$ is a  $r$-th principal root of the unity ($t\in\CC$, $t^r=1$, $t^i\neq 1$ for $0<i<r$). Then
	$$
	      \hat a(z) a(z) = a_0^{(1)}+a_1^{(1)}z^r+a_2^{(1)}z^{2r}+\ldots =: a^{(1)}(z)
	$$
	for some $a^{(1)}_i$. Moreover, if the coefficients of $a$ are real, then the coefficients of $\hat a$ are real.
\end{theorem}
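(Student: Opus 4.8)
The plan is to work not with $\hat a$ by itself but with the full symmetric product
$$
   P(z) := \hat a(z)\, a(z) = \prod_{j=0}^{r-1} a(z t^{\,j}),
$$
and to extract both assertions from a single symmetry property: that $P$ is \emph{invariant} under the substitution $z\mapsto zt$. First I would verify this invariance directly. Replacing $z$ by $zt$ simply permutes the factors, because $t^r=1$ gives $a(zt\cdot t^{\,j})=a(z t^{\,j+1})$, and as $j$ runs over $0,1,\dots,r-1$ the exponent $j+1$ runs over a complete residue system modulo $r$; hence $\{t^{\,j+1}\}=\{t^{\,j}\}$ as multisets and
$$
   P(zt)=\prod_{j=0}^{r-1} a(z t^{\,j+1}) = \prod_{j=0}^{r-1} a(z t^{\,j}) = P(z).
$$

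Next I would read off the sparsity from this invariance. Writing $P(z)=\sum_k c_k z^k$, the identity $P(zt)=P(z)$ forces $c_k t^k=c_k$, i.e. $c_k(t^k-1)=0$, for every $k$. Since $t$ is a primitive $r$-th root of unity its order is exactly $r$, so $t^k=1$ if and only if $r\mid k$; consequently $c_k=0$ whenever $r\nmid k$. This is precisely the claim $\hat a(z)a(z)=a^{(1)}_0+a^{(1)}_1 z^r+a^{(1)}_2 z^{2r}+\cdots$, with $a^{(1)}_i:=c_{ri}$.

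For the reality statement I would use the standard fact that a power series $f$ has real coefficients if and only if $\overline{f(\bar z)}=f(z)$. Assuming $a$ real, so that $\overline{a(w)}=a(\bar w)$, and noting that $t$ lies on the unit circle, whence $\overline{t^{\,j}}=t^{-j}=t^{\,r-j}$, I would compute
$$
   \overline{\hat a(\bar z)}
   = \prod_{j=1}^{r-1}\overline{a(\bar z\, t^{\,j})}
   = \prod_{j=1}^{r-1} a\big(z\,\overline{t^{\,j}}\big)
   = \prod_{j=1}^{r-1} a\big(z\, t^{\,r-j}\big).
$$
As $j$ runs over $1,\dots,r-1$ the index $r-j$ runs over the same set, so the last product is again $\prod_{j=1}^{r-1} a(z t^{\,j})=\hat a(z)$; thus $\overline{\hat a(\bar z)}=\hat a(z)$ and $\hat a$ is real.

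There is no analytic obstacle: every coefficient of the products above is a finite sum, so all manipulations are legitimate at the level of formal power series (or of ordinary polynomials when $a$ is one). The conceptual heart of the argument — and the only place demanding care — is the use of \emph{primitivity}: invariance under $z\mapsto zt$ alone annihilates the exponents not divisible by the order of $t$, and it is primitivity that makes this order equal to $r$, yielding exactly the step $r$ claimed. The reality half then reduces to the observation that complex conjugation merely reflects the root-of-unity factors via $t^{\,j}\mapsto t^{\,r-j}$, permuting them among themselves.
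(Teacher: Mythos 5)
Your proof is correct, and it supplies something the paper itself does not contain: Theorem \ref{teo:polinomio} is stated with the attribution \cite{BiniPC} (a private communication of D.\ Bini) and is given \emph{no proof} in the paper, which only verifies the corollary $r=2$ by a direct calculation (see \eqref{hat_a}) and exploits the case $r=3$ through formula \eqref{hat_a_3}. Your symmetry argument is the natural one and it is complete: the invariance $P(zt)=P(z)$ of the full product $P(z)=\prod_{j=0}^{r-1}a(zt^{j})$, which holds because multiplication by $t$ permutes the factors cyclically, forces $c_k(t^k-1)=0$ for every coefficient $c_k$ of $P$, and primitivity of $t$ (exactly the paper's hypothesis $t^r=1$, $t^i\neq 1$ for $0<i<r$) then annihilates every $c_k$ with $r\nmid k$, giving the claimed form with $a^{(1)}_i=c_{ri}$. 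The reality claim follows from the conjugation symmetry $\overline{t^{\,j}}=t^{\,r-j}$, which permutes the index set $\{1,\dots,r-1\}$ of the factors of $\hat a$ among themselves, so $\overline{\hat a(\bar z)}=\hat a(z)$ and the coefficients of $\hat a$ are fixed by conjugation. Both steps are legitimate for formal power series, since each coefficient of the finitely many products involved is a finite sum, a point you rightly flag. In short: where the paper treats the theorem as a black box and only uses its consequences for $r=2,3$, your argument proves the general statement with no extra machinery and makes transparent why the principal-root hypothesis is exactly what is needed.
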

Let us consider two Corollaries of such Theorem.
For $r=2$ we have  $\hat a(z)=a(-z)$, that is we regain the result \eqref{hat_a}.
It is clear that $a(-z)a(z)=a_0^{(1)}+a_1^{(1)}z^2+a_2^{(1)}z^4+\ldots$ (compare with \cite{Graeffe} and  the references therein). In this case the coefficients of $\hat a$ are available with no computations, we only need to compute the new coefficients  $a_i^{(1)}$.

For $r=3$ we have $\hat a(z)=a(zt)a(zt^2)$, $t=\exp{\ui \f{2\pi}{3}}$.
By Theorem \ref{teo:polinomio} the following equalities $a(z)a(zt)a(zt^2)=a_0^{(1)}+a_1^{(1)}z^3+a_2^{(1)}z^{6}+\ldots$ and
$$
      L(\hat \a)L(\a) = L(E\a^{(1)}),\ \
             E = \ba{cccc}
                   1 &  &  &  \\
                   0 &  &  &  \\
                   0 &  &  &  \\
                   0 & 1&  &  \\
                   0 & 0&  &  \\
                   0 & 0&  &  \\
                   0 & 0& 1&  \\
                   \cdot & \cdot & \cdot & \cdot \ea,
$$
hold, and the coefficients of $\hat a(z)=a(zt)a(zt^2)$ are real, provided that the ones of $a$ are.
This time, the coefficients of $\hat a$ are not easily readable from the coefficients of $a$. In order to compute them we observe that the polynomial equality $\hat a(z)=a(zt)a(zt^2)$ is equivalent to the matrix identity $L(\hat\a)=L(\c)L(\d)$, $c_k=a_kt^k$, $d_k=a_kt^{2k}$, and therefore we get the following formula
\begin{equation}\label{hat_a_3}
	  \hat\a = L(\c)\d = \Re[L(\c)]\Re[\d]-\Im[L(\c)]\Im[\d]
\end{equation}
where the last equality holds only if the coefficients of $a$ are real.

Later on we will describe an algorithm for the solution of systems $A\x=\vf$ where $A$ is  $3^s\times 3^s$ l.t.T. analogous to the one presented before, but using vectors $\hat\a$ such that the components in positions $2,3,5,6,8,9,\ldots$ of $L(\a)\hat\a$ are null. Thanks to Theorem \ref{teo:polinomio}, we have an explicit formula \eqref{hat_a_3} for such vectors, in terms of the product of a triangular Toeplitz matrix by a vector.

\section{Bernoulli numbers and triangular matrices}
\subsection{Bernoulli numbers and polynomials}

The conditions
$$
    B(x+1)-B(x) = nx^{n-1},\ \ \int_0^1 B(x)\,dx = 0,\ \ B(x)\ \hbox{polinomio}
$$
uniquely define the function $B(x)$. It is a particular degree $n$ monic polynomial called \emph{$n$-th Bernoulli polynomial} and usually denoted by the symbol $B_n(x)$.
It is simple to compute the first Bernoulli polynomials:
$$
   B_1(x)=x-\um,\ B_2(x)=x(x-1)+\f{1}{6},\ B_3(x)=x(x-\um)(x-1),\ \ldots\ .
$$
$B_0(x)$ is assumed equal to $1$.

It can be proved that Bernoulli polynomials define the coefficients of the power series representation of several functions, for instance to our aim it is useful to recall that the following power series expansion holds:
\begin{equation}\label{bernoulli_1}
	 \f{ t \exp{xt} }{ \exp{t}-1 } = \sum_{n=0}^{+\infty} \f{B_n(x)}{n!} t^n .
\end{equation}
Moreover, Bernoulli polynomials satisfy many identities. Among all we recall the following ones, concerning the value of their derivatives and their property of symmetry with respect to the line $x=\um$:
$$
       B_n^\prime(x) = n B_{n-1}(x),\ \ B_n(1-x) = (-1)^n B_n(x).
$$
It is simple to observe as a consequence of their definition and of the last identity that all the Bernoulli polynomials with odd degree (except $B_1(x)$) vanish for $x=0$. On the contrary, the value that an even degree Bernoulli polynomial attains in the origin is different from zero and especially important.
In particular, recall the following Euler formula
$$
       \zeta(2j) = \f{|B_{2j}(0)|(2\pi)^{2j}}{2(2j)!},\ \
       \zeta(s)=\sum_{k=1}^{+\infty} \f{1}{k^s},
$$
which shows the strict relation between the numbers $B_{2j}(0)$ and the values that the  Riemann Zeta function $\zeta(s)$ attains over all even positive integer numbers $2j$ \cite{BookOnBernoulli}, \cite{Bn_EulForm_Apostol}. For instance, from such relation and from the fact that  $\zeta(2j)\to 1$ if $j\to +\infty$, one deduces that $|B_{2j}(0)|$ tends to $+\infty$ almost the same way as $2(2j)!/(2\pi)^{2j}$ does.
Another important formula involving the values $B_{2j}(0)$ is the Euler-Maclaurin formula \cite{BookOnBernoulli}, which is useful for the computation of sums:
if $f$ is a smooth enough function over $[m,n]$, $m,n\in\ZZ$, then
\begin{equation}\label{E-M}
	\sum_{r=m}^n f(r) = \um [ f(m) + f(n) ] + \int_m^n f(x)\,dx
              + \sum_{j=1}^k \f{B_{2j}(0)}{(2j)!}[f^{(2j-1)}(n)-f^{(2j-1)}(m)]
                       + u_{k+1} ,
\end{equation}
where
\begin{align*}
	u_{k+1}&=\f{1}{(2k+1)!}\int_m^n f^{(2k+1)}(x)\o{B}_{2k+1}(x)\,dx \\
	 &=- \f{1}{(2k)!}\int_m^n f^{(2k)}(x)\o{B}_{2k}(x)\,dx\\
	&=\f{1}{(2k+2)!}\int_m^n f^{(2k+2)}(x)[B_{2k+2}(0)-\o{B}_{2k+2}(x)]\,dx
\end{align*}
 and $\o{B}_n$
is  $B_n|_{[0,1)}$ extended periodically over $\RR$. Let us recall that the
Eulero-Maclaurin formula also leads to an important representation of the error of the trapezoidal rule
$\I_h = h[\um g(a) + \sum_{r=1}^{n-1} g(a+rh) + \um g(b)]$, $h=\f{b-a}{n}$,
in the approximation of the definite integral $\I = \int_a^b g(x)\,dx$. Such representation, holding for functions $g$ which are smooth enough in $[a,b]$, is obtained by setting $m=0$ and $f(t)=g(a+th)$ in \eqref{E-M}:
\begin{equation}\label{T-R}
  \I_h= \I
   + \sum_{j=1}^k \f{h^{2j}B_{2j}(0)}{(2j)!} [g^{(2j-1)}(b)-g^{(2j-1)}(a)] + r_{k+1},\ \
   r_{k+1} = \f{g^{(2k+2)}(\xi)h^{2k+2}(b-a)B_{2k+2}(0)}{(2k+2)!},
\end{equation}
$\xi\in(a,b)$. Such representation of the error, in terms of even powers of $h$, shows the reason why the Romberg extrapolation method for estimating a definite integral is efficient, when combined with trapezoidal rule. From \eqref{T-R} it is indeed clear that $\tilde\I_{h/2}:=(2^2\I_{h/2}-\I_h)/(2^2-1)$ approximates $\I$ with an error of order $O(h^4)$, whereas the error made by  $\I_h$ and $\I_{h/2}$ is of order $O(h^2)$.

For these and many other reasons (see for instance \cite{MazurSteinBook}, \cite{Bn_MazurCyclotomicFields}, \cite{Bn_Mazur19onArsConjANDapplicinPureMath}, \cite{BookOnBernoulli}, \cite{Bn_EulForm_Apostol}), the values $B_{2j}(0)$ have their own name: \emph{Bernoulli numbers}.

\subsection{Bernoulli numbers solve triangular Toeplitz systems}
From \eqref{bernoulli_1} it follows that Bernoulli numbers satisfy the following identity
$$
       \f{ t }{ \exp{t}-1 } = - \um t
                   +   \sum_{k=0}^{+\infty} \f{B_{2k}(0)}{(2k)!} t^{2k} .
$$
Multiplying the latter by $\exp{t}-1$, expanding $\exp{t}$ in terms of powers of $t$, and setting to zero the coefficients of $t^i$ of the right hand side, $i=2,3,4,\ldots$, yields the following equations:
\begin{equation}\label{equazioni}
	- \um j + \sum_{k=0}^{[\f{j-1}{2}]} \cb{j}{2k} B_{2k}(0) = 0,\ \ j=2,3,4,\ldots\ .
\end{equation}
Now, putting together equations \eqref{equazioni} for $j$ even and for $j$ odd, we obtain two lower triangular linear systems that uniquely define Bernoulli numbers:
$$
      \ba{ccccc}
      \cb{2}{0} &           &           &           &  \\
      \cb{4}{0} & \cb{4}{2} &           &           &  \\
      \cb{6}{0} & \cb{6}{2} & \cb{6}{4} &           &  \\
      \cb{8}{0} & \cb{8}{2} & \cb{8}{4} & \cb{8}{6} &  \\
        \cdot   & \cdot     &  \cdot    & \cdot     & \cdot \ea
      \ba{c} B_0(0) \\ B_2(0) \\ B_4(0) \\ B_6(0) \\ \cdot \ea
   =  \ba{c} 1 \\ 2 \\ 3 \\ 4 \\ \cdot \ea ,
$$
$$
     \ba{ccccc}
        \cb{1}{0} &           &           &           &  \\
        \cb{3}{0} & \cb{3}{2} &           &           &  \\
        \cb{5}{0} & \cb{5}{2} & \cb{5}{4} &           &  \\
        \cb{7}{0} & \cb{7}{2} & \cb{7}{4} & \cb{7}{6} &  \\
        \cdot     &  \cdot    & \cdot     &   \cdot   & \cdot \ea
      \ba{c} B_0(0) \\ B_2(0) \\ B_4(0) \\ B_6(0) \\ \cdot \ea
   =  \ba{c} 1 \\ 3/2 \\ 5/2 \\ 7/2 \\ \cdot \ea .
$$
From such systems we can for instance easily compute the first Bernoulli numbers:
\begin{equation}\label{first9Bernnumb}
   1,\ \f{1}{6},\ -\f{1}{30},\ \f{1}{42},\ -\f{1}{30},
   \ \f{5}{66},\ -\f{691}{2730},\ \f{7}{6},\ -\f{3617}{510}.
\end{equation}
Now we want to obtain an analytic representation for the coefficients matrices $W_e$ and $W_o$ of such linear systems. To this end it is enough to observe that $W_e$ and $W_o$ are suitable submatrices of the Tartaglia matrix $X$, which can be represented as a power series. More precisely, set
$$
   Y = \ba{ccccc}
          0 &  &  &  & \\
          1 & 0&  &  & \\
            & 2& 0&  & \\
            &  & 3& 0& \\
            &  &  & \cdot& \cdot\ea , \ \
\phi = \ba{cccccc}
               0 &   &   &   &  &  \\
               2 &  0&   &   &  &  \\
                 & 12&  0&   &  &  \\
                 &   & 30&  0&  &  \\
                 &   &   & 56& 0&  \\
                 &   &   &   & \cdot & \cdot\ea ,\ \
2=1*2,\ 12=3*4,\ 30=5*6,\ \ldots\, ,
$$
and note that from the equality
{\small
$$
X : = \ba{cccccccc}
      \cb{0}{0} & & & & & & & \\
   \cb{{\it 1}}{{\it 0}} & \cb{1}{1} & & & & & & \\
   \cb{{\bf 2}}{{\bf 0}} & \cb{2}{1} & \cb{2}{2} & & & & & \\
\cb{{\it 3}}{{\it 0}} & \cb{3}{1} & \cb{{\it 3}}{{\it 2}} & \cb{3}{3} & & & & \\
\cb{{\bf 4}}{{\bf 0}} & \cb{4}{1} & \cb{{\bf 4}}{{\bf 2}} & \cb{4}{3} & \cb{4}{4} & & & \\
\cb{{\it 5}}{{\it 0}} & \cb{5}{1} & \cb{{\it 5}}{{\it 2}} & \cb{5}{3} & \cb{{\it 5}}{{\it 4}} & \cb{5}{5} & & \\
\cb{{\bf 6}}{{\bf 0}} & \cb{6}{1} & \cb{{\bf 6}}{{\bf 2}} & \cb{6}{3} & \cb{{\bf 6}}{{\bf 4}} & \cb{6}{5} & \cb{6}{6} & \\
\cdot & \cdot  & \cdot  & \cdot & \cdot  & \cdot & \cdot & \cdot \ea
= \ba{ccccccc}
1 &   &   &   &   &  & \\
1 & 1 &   &   &   &  & \\
1 & 2 & 1 &   &   &  & \\
1 & 3 & 3 & 1 &   &  & \\
1 & 4 & 6 & 4 & 1 &  & \\
1 & 5 & 10& 10& 5 & 1 & \\
\cdot & \cdot & \cdot & \cdot & \cdot & \cdot & \cdot \ea
=  \sum_{k=0}^{+\infty} \ \f{1}{k!}\ Y^k,
$$ }
which holds because $[X]_{ij} = \f{1}{(i-j)!}[Y^{i-j}]_{ij}
       = \f{1}{(i-j)!} j\cdots (i-2)(i-1) = \cb{i-1}{j-1}$, $1\leq j\leq i\leq n$,
it follows that
$$
      W_e = Z^T \phi \sum_{k=0}^{+\infty} \f{1}{(2k+2)!} \phi^k ,\ \ \
         W_o = \ba{ccccc}
              1 &    &    &    &  \\
                & 3 &    &    &  \\
                &    & 5 &    &  \\
                &    &    & 7 &  \\
                &    &    &    & \cdot \ea \cdot
    \sum_{k=0}^{+\infty} \f{1}{(2k+1)!} \phi^k .
$$
We can therefore rewrite the two linear systems solved by Bernoulli numbers as follows:
\begin{equation}\label{quasi-pari}
  \sum_{k=0}^{+\infty} \f{2}{(2k+2)!} \phi^k \b = \q^e,\ \
     \b = \ba{c} B_0(0) \\ B_2(0) \\ B_4(0) \\ B_6(0) \\ \cdot \ea,\
     \q^e = \ba{c} 1 \\ 1/3 \\ 1/5 \\ 1/7 \\ \cdot \ea ,
\end{equation}
\begin{equation}\label{quasi-dispari}
 \sum_{k=0}^{+\infty} \f{1}{(2k+1)!} \phi^k \b = \q^o,\ \
     \q^o = \ba{c} 1 \\ 1/2 \\ 1/2 \\ 1/2 \\ \cdot \ea .
\end{equation}
Now, let us show that systems \eqref{quasi-pari} and \eqref{quasi-dispari} are equivalent to two lower triangular Toeplitz linear systems. Our aim is to replace $\phi$, a matrix whose subdiagonal entries are all different, by a matrix whose subdiagonal entries are all equal.

Set $D=\diag(d_1,\,d_2,\,d_3,\,\ldots)$, $d_i\neq 0$. By investigating the nonzero entries of the matrix $D \phi D^{-1}$, it is easy to observe that it can be forced to be equal to a matrix of the form $x Z$; just choose $d_k=x^{k-1}d_1/(2k-2)!$,
$k=1,2,3,\ldots$. So, if
\begin{equation}\label{D}
 D = \ba{cccccc}
            1  &         &           &   &  &  \\
               &\f{x}{2!}&           &   &  &  \\
               &         &\f{x^2}{4!}&   &  &  \\
               &         &           &\cdot&  &  \\
               &         &           &   &\f{x^{n-1}}{(2n-2)!}&  \\
               &         &           &   &  & \cdot \ea ,
\end{equation}
we have the equality $D \phi D^{-1}=xZ$. Then from \eqref{quasi-pari} it follows
$$
D\q^e = \sum_{k=0}^{+\infty} \f{2}{(2k+2)!} D\phi^k D^{-1}D\b
= \sum_{k=0}^{+\infty} \f{2}{(2k+2)!} (D\phi D^{-1})^k D\b\, ,
$$
that is
\begin{equation}\label{pari}
  \sum_{k=0}^{+\infty} \f{2x^k}{(2k+2)!} Z^k D\b = D\q^e ,
\end{equation}
and analogously from \eqref{quasi-dispari} it follows
\begin{equation} \label{dispari}
 \sum_{k=0}^{+\infty} \f{x^k}{(2k+1)!} Z^k D\b = D\q^o .
\end{equation}
Summarizing, let $\z$ be the vector $D\b$. Then the vector $\{\b\}_n$ whose entries are the first $n$ Bernoulli numbers can be obtained by a two-phase procedure:
\begin{itemize}
 \item[1.]
compute the first $n$ components of the solution of the lower triangular Toeplitz system \eqref{pari} (or \eqref{dispari}), i.e. $\{\z\}_n$ such that
$\{\,\sum_{k=0}^{+\infty} \f{2x^k}{(2k+2)!} Z^k \,\}_n\{\z\}_n = \{D\q^e\}_n$
(or $\{\,\sum_{k=0}^{+\infty} \f{x^k}{(2k+1)!} Z^k \,\}_n\{\z\}_n = \{D\q^o\}_n$)
\item[2.] solve the linear system $\{D\}_n\{\b\}_n=\{\z\}_n$ over the rational field.
\end{itemize}

Observe that the computation in phase 1 can be done by means of the algorithm described alongside the previous section at a computational cost of $O(n\log_2 n)$, and that such algorithm can be made numerically stable by a suitable choice of the parameter $x$. For instance, the choice $x=(2\pi)^2$ would ensure the sequence $z_n=\f{x^{n-1}}{(2n-2)!}B_{2n-2}(0)$, $n\in\NN$, to be bounded; indeed in this case $|z_n|\to 2$ if $n\to+\infty$, due to Euler formula. So, in phase 1, one obtains $n$ machine numbers which are good approximations over $\RR$ of the quantities $x^sB_{2s}(0)/(2s)!$, $s=0,1,\ldots,n-1$. Then, in phase 2, one should find a way to deduce, from the machine numbers obtained, the rational Bernoulli numbers $B_{2s}(0)$, $s=0,1,\ldots,n-1$.

\subsection{The Ramanujan lower triangular Toeplitz linear system solved by Bernoulli numbers}
In \cite{Ramanujan} Ramanujan writes explicitly $11$ sparse equations solved by the $11$ Bernoulli numbers $B_2(0)$, $B_4(0)$, $\ldots$, $B_{22}(0)$. They are the first of an infinite set of sparse equations solved by all the Bernoulli numbers. The $11$ Ramanujan equations all together form a lower triangular system which, according to our notations and definitions, can be rewritten as follows
\begin{equation}\label{ramanujan-system}
      \ba{cccccccccccc}
   1 &  &  &  &  &  &  &  &  &  &  &  \\
   0 & 1&  &  &  &  &  &  &  &  &  &  \\
   0 & 0& 1&  &  &  &  &  &  &  &  &  \\
   \f{1}{3} & 0& 0& 1&  &  &  &  &  &  &  &  \\
   0 & \f{5}{2}& 0& 0& 1&  &  &  &  &  &  &  \\
   0 & 0& 11& 0& 0& 1&  &  &  &  &  &  \\
   \f{1}{5} & 0& 0& \f{143}{4}& 0& 0& 1&  &  &  &  &  \\
   0 & 4& 0& 0& \f{286}{3}& 0& 0& 1&  &  &  &  \\
   0 & 0& \f{204}{5}& 0& 0& 221& 0& 0& 1&  &  &  \\
   \f{1}{7} & 0& 0& \f{1938}{7}& 0& 0& \f{3230}{7}& 0& 0& 1&  &  \\
   0 & \f{11}{2}& 0& 0& \f{7106}{5}& 0& 0& \f{3553}{4}& 0& 0& 1&  \\
\cdot & \cdot & \cdot& \cdot& \cdot& \cdot&
        \cdot& \cdot& \cdot& \cdot& \cdot& \cdot\ea
     \ba{c} B_2(0) \\ B_4(0) \\ B_6(0) \\ B_8(0) \\ B_{10}(0) \\ B_{12}(0) \\
    B_{14}(0) \\ B_{16}(0) \\ B_{18}(0) \\ B_{20}(0) \\ B_{22}(0) \\ \cdot \ea
   = \ba{c} \f{1}{6} \\ -\f{1}{30} \\ \f{1}{42} \\ \f{1}{45} \\ -\f{1}{132} \\
      \f{4}{455} \\ \f{1}{120} \\ -\f{1}{306} \\ \f{3}{665} \\ \f{1}{231} \\
      -\f{1}{552} \\ \cdot \ea .
\end{equation}
For example, by using the last but three equations of such system, from the Bernoulli numbers $B_2(0),\ldots, B_{16}(0)$ listed in \eqref{first9Bernnumb}, the following further Bernoulli numbers can be easily obtained:
$$
          B_{18}(0) = \f{43867}{798}, \ \
          B_{20}(0) = - \f{174611}{330}, \ \
          B_{22}(0) = \f{854513}{138} .
$$
Let $R$ be the semi-infinite coefficient matrix of the above Ramanujan system. By recalling the definition of the semi-infinite lower shift matrix $Z$ and of the semi-infinite vector $\b=[B_0(0)\, B_2(0)\, B_4(0)\,\cdot\,]^T$, the Ramanujan system can be shortly indicated as $R (Z^T \b) = \vf$, where $\vf=[f_1\, f_2\, f_3\,\cdot\,]^T$ obviously denotes the right hand side vector in \eqref{ramanujan-system}.

Apparently the non-zero entries of $R$ are not related with each other, and it seems so also for the entries of $\vf$. That is, it seems to be not possible to guess, just by looking at the above $11$ equations, the twelfth equation of the Ramanujan system. We can only guess that the non-zero entries of $R$ are in the same positions as the non-zero entries of a lower triangular Toeplitz matrix $\tilde R$ of the form $\sum_{k=0}^{+\infty} w_k Z^{3k}$, and, may be, it is possible to guess the sign of the entries of $\vf$.

Actually it is not difficult to note that the following identity \textit{must hold}
\begin{equation}\label{conj}
      R \Lambda^{-1} = \Lambda^{-1} \tilde R,\ \ \
      \Lambda = Z^T D Z = \ba{cccc}
                           \f{x}{2!} &             &             &  \\
                                     & \f{x^2}{4!} &             &  \\
                                     &             & \f{x^3}{6!} &  \\
                                     &             &             & \cdot \ea
\end{equation}
where $D$ is defined in \eqref{D} and $\tilde R$ is the following lower triangular Toeplitz matrix:
$$
      \tilde R = \sum_{k=0}^{+\infty} \f{2x^{3k}}{(6k+2)!(2k+1)} Z^{3k}
 = \ba{cccccccccccc}
   1 &  &  &  &  &  &  &  &  &  &  &  \\
   0 & 1&  &  &  &  &  &  &  &  &  &  \\
   0 & 0& 1&  &  &  &  &  &  &  &  &  \\
   \f{2x^3}{8!3} & 0& 0& 1&  &  &  &  &  &  &  &  \\
   0 & \f{2x^3}{8!3}& 0& 0& 1&  &  &  &  &  &  &  \\
   0 & 0& \f{2x^3}{8!3}& 0& 0& 1&  &  &  &  &  &  \\
   \f{2x^6}{14!5} & 0& 0& \f{2x^3}{8!3}& 0& 0& 1&  &  &  &  &  \\
   0 & \f{2x^6}{14!5}& 0& 0& \f{2x^3}{8!3}& 0& 0& 1&  &  &  &  \\
   0 & 0& \f{2x^6}{14!5}& 0& 0& \f{2x^3}{8!3}& 0& 0& 1&  &  &  \\
   \f{2x^9}{20!7} & 0& 0& \f{2x^6}{14!5}& 0& 0& \f{2x^3}{8!3}& 0& 0& 1&  &  \\
   0 & \f{2x^9}{20!7}& 0& 0& \f{2x^6}{14!5}& 0& 0& \f{2x^3}{8!3}& 0& 0& 1&  \\
\cdot & \cdot & \cdot& \cdot& \cdot& \cdot&
        \cdot& \cdot& \cdot& \cdot& \cdot& \cdot\ea .
$$
In fact it is easy to check that the $11\times 11$ upper left submatrix of $R\Lambda^{-1}$ coincides with the $11\times 11$ upper left submatrix of $\Lambda^{-1}\tilde R$.

Assuming that the conjecture \eqref{conj} is true, we have that
$R (Z^T \b) = \vf$  iff
$R \Lambda^{-1}(\Lambda Z^T \b) = \vf$ iff
$\Lambda^{-1} \tilde R (Z^T D \b) = \vf$ iff
\begin{equation}\label{quasi-Ramanujan}
 \tilde R  (Z^T D \b) = Z^T D Z\vf.
\end{equation}
Thus, the vector $Z^T D\b$ solves a lower triangular Toeplitz system which is more sparse than the l.t.T. systems \eqref{pari}, \eqref{dispari}, since in its coefficient matrix two null diagonals alternate the nonnull ones. Such Ramanujan l.t.T. system will be defined more precisely in the following (see \eqref{typeII}, \eqref{typeR}).

\subsection{A unifying theorem with $6$ l.t.T. linear systems solved by Bernoulli numbers}
In this section we collect in a Theorem three l.t.T. linear systems solved by the vector $D_x\b$, say of type I, and the corresponding l.t.T. linear systems solved by the vector $Z^TD_x\b$, say of type II ($D_x$ is the matrix $D$ in \eqref{D}).
In fact, till now, we have only found two systems of type I, the even and odd systems \eqref{pari} and \eqref{dispari}, and, partially, one system of type II, the Ramanujan l.t.T. system \eqref{quasi-Ramanujan} (note that for the latter system only the coefficient matrix has been written explicitly).

In the following, first we state a Proposition which allows one to state a system of type II from a system of type I, and viceversa. Then we state the Theorem, with the six l.t.T. linear systems solved by Bernoulli numbers, and we prove it by applying the Proposition to the even, odd, and Ramanujan l.t.T. systems found till now, and, in the same time, by completing the definition of the Ramanujan l.t.T. system.

\begin{proposition}\label{pro:1}
Let $Z_{n-1}$ and $Z_n$ be the upper-left $(n-1)\times (n-1)$ and
$n\times n$ submatrices of the semi-infinite lower-shift matrix $Z$, respectively.
Assume that, for some $\al_j$ and $f_j$ (or $w_j$), the following equality holds: {\small
$$
    \sum_{j=0}^{n-1} \al_j Z_n^j
    \ba{c}
       B_0(0) \\
       \f{x}{2!}B_2(0) \\
       \f{x^2}{4!}B_4(0) \\
           \cdot \\
       \f{x^{n-1}}{(2(n-1))!}B_{2(n-1)}(0) \ea
   = \ba{c}
          \eta \\
       \f{x}{2!}f_1 \\
       \f{x^2}{4!}f_2 \\
           \cdot \\
       \f{x^{n-1}}{(2(n-1))!}f_{n-1} \ea
     + B_0(0) \ba{c}
                \mu \\
                 \al_1 \\
                 \al_2 \\
                 \cdot \\
                 \al_{n-1} \ea
   \ \ \Big(\ =  \ba{c}
        w_0 \\
       \f{x}{2!}w_1 \\
       \f{x^2}{4!}w_2 \\
           \cdot \\
       \f{x^{n-1}}{(2(n-1))!}w_{n-1} \ea\ \Big),
$$ }
where $\eta$ and $\mu$ are arbitrary parameters.
Then we have {\small
$$
    \sum_{j=0}^{n-2} \al_j Z_{n-1}^j
    \ba{c}
       \f{x}{2!}B_2(0) \\
       \f{x^2}{4!}B_4(0) \\
           \cdot \\
       \f{x^{n-1}}{(2(n-1))!}B_{2(n-1)}(0) \ea
   = \ba{c}
       \f{x}{2!}f_1 \\
       \f{x^2}{4!}f_2 \\
           \cdot \\
       \f{x^{n-1}}{(2(n-1))!}f_{n-1} \ea
   \ \ \Big(\ =  \ba{c}
       \f{x}{2!}w_1 \\
       \f{x^2}{4!}w_2 \\
           \cdot \\
       \f{x^{n-1}}{(2(n-1))!}w_{n-1} \ea
     - B_0(0) \ba{c}
                 \al_1 \\
                 \al_2 \\
                 \cdot \\
                 \al_{n-1} \ea\ \Big).
$$ }
Also the converse is true provided that
$\al_0 B_0(0)=\eta+B_0(0)\mu$
(or $\al_0 B_0(0) = w_0$).
\end{proposition}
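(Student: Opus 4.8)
The plan is to read both displayed identities componentwise and to recognize that each is merely a lower triangular Toeplitz matrix times a vector, the matrix being determined by its first column. Set $c_k=\f{x^k}{(2k)!}B_{2k}(0)$ for $k\ge 1$ and $c_0=B_0(0)$, so that the column acted on in the hypothesis is $[c_0\ c_1\ \cdots\ c_{n-1}]^T$ and the one in the conclusion is its lower part $[c_1\ \cdots\ c_{n-1}]^T$. Write $A:=\sum_{j=0}^{n-1}\al_j Z_n^j$ for the $n\times n$ l.t.T. matrix with first column $[\al_0\ \al_1\ \cdots\ \al_{n-1}]^T$; then $\sum_{j=0}^{n-2}\al_j Z_{n-1}^j$ is exactly its upper-left $(n-1)\times(n-1)$ submatrix, the same Toeplitz pattern truncated.

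First I would write out, for each row index $i=1,\dots,n-1$, the $i$-th scalar equation of the hypothesis:
$$
   \sum_{k=0}^{i}\al_{i-k}\,c_k \;=\; \f{x^i}{(2i)!}f_i \,+\, B_0(0)\,\al_i .
$$
The crucial observation is that the lone contribution of the first column of $A$ to row $i$ is $\al_i c_0=\al_i B_0(0)$, precisely because $c_0=B_0(0)$. Isolating this $k=0$ summand on the left and the matching $B_0(0)\al_i$ on the right, the two cancel identically and leave
$$
   \sum_{k=1}^{i}\al_{i-k}\,c_k \;=\; \f{x^i}{(2i)!}f_i ,\qquad i=1,\dots,n-1 .
$$
Now the left-hand side is exactly the $(i-1)$-th component of $(\sum_{j=0}^{n-2}\al_j Z_{n-1}^j)[c_1\ \cdots\ c_{n-1}]^T$, while the right-hand side is the $(i-1)$-th component of the conclusion's data vector. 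Collecting these $n-1$ scalar identities yields the asserted equality. Note that the $i=0$ row of the hypothesis, namely $\al_0 c_0=\eta+B_0(0)\mu$, is never used: deflation simply discards it.

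For the converse I would run the same bookkeeping backwards. Starting from the conclusion $\sum_{k=1}^{i}\al_{i-k}c_k=\f{x^i}{(2i)!}f_i$ for $i=1,\dots,n-1$ and adding $\al_i c_0=B_0(0)\al_i$ to both sides reconstructs rows $1,\dots,n-1$ of the hypothesis verbatim. The only datum not recoverable from the deflated system is its top row, whose content is the single scalar relation $\al_0 B_0(0)=\eta+B_0(0)\mu$; this is exactly the extra hypothesis imposed in the statement, so the converse holds under it. The parenthetical $w$-form needs no separate work: since the $i$-th equation there reads $\f{x^i}{(2i)!}w_i=\f{x^i}{(2i)!}f_i+B_0(0)\al_i$ (and $w_0=\eta+B_0(0)\mu$), subtracting $B_0(0)\al_i$ in the conclusion reproduces the same cancellation.

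I expect no genuine obstacle; the whole argument is a first-column cancellation. The only points demanding care are the index shift between $Z_n$ and $Z_{n-1}$ and the identity $c_0=B_0(0)$, which is exactly what makes the first-column contribution $\al_i c_0$ coincide with the correction term $B_0(0)\al_i$. In other words, the right-hand side summand $B_0(0)[\mu\ \al_1\ \cdots\ \al_{n-1}]^T$ is engineered so that its lower part erases the first column of $A$, turning the $n\times n$ system into the deflated $(n-1)\times(n-1)$ one.
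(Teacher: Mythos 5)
Your proof is correct and is essentially the paper's own argument: the paper merely hints ``exploit the equality $Z_n = \left[\begin{array}{cc} 0 & \mathbf{0}^T \\ \mathbf{e}_1 & Z_{n-1}\end{array}\right]$'' and leaves the details to the reader, and your row-by-row cancellation of $\al_i c_0 = B_0(0)\al_i$ against the correction vector is exactly those details written out entrywise rather than in block form. The handling of the converse via the single discarded top-row relation $\al_0 B_0(0)=\eta+B_0(0)\mu$ matches the proposition's proviso precisely.
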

\begin{proof} Exploit the equality $Z_n = \ba{cc} 0 & \vn^T \\ \e_1 & Z_{n-1}\ea$. The details are left to the reader.\end{proof}
\begin{theorem} Set
$$
     Z = \ba{cccc}
            0 &   &  &  \\
            1 & 0 &  &  \\
              & 1 & 0 &  \\
              &   & \cdot & \cdot \ea,\ \
    \a = \ba{c}
            a_0 \\
            a_1 \\
            a_2 \\
            \cdot \ea,\ \
    \ L(\a) = \sum_{i=0}^{+\infty}\, a_i\, Z^i
        = \ba{ccccc}
            a_0 &       &       &       & \\
            a_1 & a_0 &       &       & \\
            a_2 & a_1 & a_0 &       & \\
            a_3 & a_2 & a_1 & a_0 & \\
            \cdot & \cdot & \cdot & \cdot & \cdot \ea .
$$
Let $d(\z)$ be the diagonal matrix with $z_i$ as diagonal entries. Set
$$
 \b = [B_0(0) \ B_2(0) \ B_4(0) \ \cdot\ ]^T,\ \
     D_x = \diag(\f{x^i}{(2i)!},\, i=0,1,2,\ldots),\ x\in\RR,
$$
where $B_{2i}(0)$, $i=0,1,2,\ldots$, are the Bernoulli numbers.

\noindent
Then {\it the vectors $D_x\b$ and $Z^TD_x\b$ solve the following
l.t.T. linear systems}
\begin{equation}\label{typeI}
   L(\a)\,(D_x\b) = D_x\q,
\end{equation}
\begin{equation}\label{typeII}
   L(\a)\,(Z^TD_x\b) = d(\z)Z^TD_x\q,
\end{equation}
where the vectors $\a=(a_i)_{i=0}^{+\infty}$,
$\q=(q_i)_{i=0}^{+\infty}$, and $\z=(z_i)_{i=1}^{+\infty}$,
can assume respectively the values:
\begin{equation}\label{typeR}
  \bi{c}
   \displaystyle{
   a_i^R = \de_{i=0\mod 3} \f{2x^i}{(2i+2)!(\f{2}{3}i+1)},\ \
   q_i^R = \f{1}{(2i+1)(i+1)} (1 - \de_{i=2\mod 3}\f{3}{2}),\
   i=0,1,2,3,\ldots
    } \\
   \displaystyle{
   z_{i}^R = 1 - \de_{i=0\mod 3}\f{1}{\f{2}{3}i+1},\ i=1,2,3,\ldots,
    } \ei
\end{equation}
\begin{equation}\label{typeEven}
 \bi{c}
   \displaystyle{
   a_i^e = \f{2x^i}{(2i+2)!},\ \
   q_i^e = \f{1}{2i+1},\
   i=0,1,2,3,\ldots
   } \\
   \displaystyle{ z_{i}^e = \f{i}{i+1},\ i=1,2,3,\ldots,
    } \ei
\end{equation}
\begin{equation}\label{typeOdd}
  \bi{c}
   \displaystyle{
   a_i^o = \f{x^i}{(2i+1)!},\ i=0,1,2,3,\ldots,\
   q_0^o = 1,\ q_i^o =\f{1}{2},\ i=1,2,3,\ldots
   } \\
   \displaystyle{
   z_{i}^o = \f{2i-1}{2i+1},\ i=1,2,3,\ldots\ .
    } \ei
\end{equation}
\end{theorem}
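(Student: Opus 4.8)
The plan is to prove all six systems at once by deriving them from the three already in place --- the even type I system \eqref{pari}, the odd type I system \eqref{dispari}, and the Ramanujan type II system \eqref{quasi-Ramanujan} --- through a single application of Proposition \ref{pro:1} in each case. That Proposition is exactly the device that trades a type I system $L(\a)(D_x\b)=D_x\q$ for a type II system carrying the same coefficient matrix $L(\a)$: the passage deletes the top equation and shifts the remaining ones up, which is precisely left multiplication of the solution $D_x\b$ by $Z^T$. Thus I would not touch any Bernoulli identity directly; the three systems already established do all the analytic work, and the Proposition merely reshuffles them.

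The first concrete step is to make the shift explicit. Writing $\v=D_x\b$ with $v_0=B_0(0)=1$, the $i$-th equation of a type I system is $\sum_{k=0}^{i}a_{i-k}v_k=\frac{x^i}{(2i)!}q_i$; peeling off the $k=0$ term and re-indexing shows that $Z^T\v$ satisfies a system whose $(i-1)$-th component equals $\frac{x^i}{(2i)!}q_i-a_i$. Comparing this with the prescribed right-hand side $d(\z)Z^TD_x\q$, whose $(i-1)$-th component is $z_i\frac{x^i}{(2i)!}q_i$, forces the single identity
$$
   z_i = 1 - \frac{(2i)!}{x^i}\,\frac{a_i}{q_i}.
$$
This formula is the engine of the proof. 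Feeding in the data \eqref{typeEven} collapses to $z_i^e=\frac{i}{i+1}$, the data \eqref{typeOdd} to $z_i^o=\frac{2i-1}{2i+1}$, and the data \eqref{typeR}, where $a_i^R$ is supported on $i=0\mod 3$, to $z_i^R=1-\de_{i=0\mod 3}\frac{1}{\frac{2}{3}i+1}$, each after a one-line cancellation of factorials. The forward direction of the Proposition therefore converts the even and odd type I systems into their type II counterparts, and these four cases are settled.

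For the Ramanujan line I would run the Proposition in reverse, starting from \eqref{quasi-Ramanujan}, whose coefficient matrix $\tilde R=L(\a^R)$ is already displayed. The converse requires the compatibility condition $a_0^R B_0(0)=q_0^R$; here $a_0^R=1$ and $q_0^R=1$, so it holds, and reattaching the lost top equation produces the type I Ramanujan system, completing the definition of $\q^R$. I would then pin down the case split in $q_i^R$ by checking the relation $f_i=z_i^R q_i^R$ against the explicit right-hand side of \eqref{ramanujan-system}: for $i=2$ one needs $q_2^R=-\frac{1}{30}$, which forces the factor $1-\de_{i=2\mod 3}\frac{3}{2}$, while for $i=3$ one gets $z_3^R q_3^R=\frac{2}{3}\cdot\frac{1}{28}=\frac{1}{42}$, matching the listed entry. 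A handful of such checks confirms the closed form in \eqref{typeR}.

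The main obstacle is not the algebra, which in every case reduces to a short factorial identity, but the status of the Ramanujan coefficient matrix: the equality $\tilde R=L(\a^R)$ rests on the conjecture \eqref{conj}, verified only in the $11\times 11$ corner, so the Ramanujan half of the theorem is conditional on that conjecture and I would state this explicitly. The secondary care point is index discipline: the diagonal $d(\z)=\diag(z_1,z_2,\dots)$ acts on $Z^TD_x\q$ with a one-step offset, and the matrix $\Lambda=Z^TD_xZ$ appearing in \eqref{quasi-Ramanujan} is the top-entry-deleted copy of $D_x$, so the alignment of the triples $(f_i,q_i,z_i)$ through the $Z^T$ shift must be tracked carefully to land every right-hand side in the form prescribed by \eqref{typeII}.
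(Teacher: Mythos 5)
Your proposal is correct and takes essentially the same route as the paper's own proof: the even and odd type II systems come from \eqref{pari} and \eqref{dispari} via the forward direction of Proposition \ref{pro:1}, the Ramanujan type I system comes from the converse direction applied to \eqref{quasi-Ramanujan} (conditionally on the conjecture \eqref{conj}), and the entries $q_i^R$ are pinned down by matching against Ramanujan's eleven explicit right-hand sides, exactly as in the paper's ``clever remark.'' Your single formula $z_i=1-\frac{(2i)!}{x^i}\,\frac{a_i}{q_i}$ is a compact repackaging of the component-by-component computations the paper performs separately for each of the three cases, not a different argument.
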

\begin{proof}
From the Ramanujan semi-infinite l.t.T. linear system \eqref{quasi-Ramanujan}, we obtain the following finite linear system
\begin{equation}\label{star}
   \bi{l}
     \sum_{j=0}^{n-2} \al_j Z_{n-1}^j
     \ba{c}
       \f{x}{2!}B_2(0) \\
       \f{x^2}{4!}B_4(0) \\
           \cdot \\
       \f{x^{n-1}}{(2(n-1))!}B_{2(n-1)}(0) \ea
   = \ba{c}
       \f{x}{2!}f_1 \\
       \f{x^2}{4!}f_2 \\
           \cdot \\
       \f{x^{n-1}}{(2(n-1))!}f_{n-1} \ea,\
  \al_j = \de_{j=0\mod 3} \f{2x^j}{(2j+2)!(\f{2}{3}j+1)}, \\ \\
   f_1=\f{1}{6},\ f_2=-\f{1}{30},\ f_3=\f{1}{42},\ f_4=\f{1}{45},\  f_5=-\f{1}{132},\ f_6=\f{4}{455}, \\
   f_7=\f{1}{120},\ f_8=,-\f{1}{306}\ f_9=\f{3}{665},\ f_{10}=\f{1}{231},\ f_{11}=-\f{1}{552},\ldots\ . \ei
\end{equation}
Then, by Proposition \ref{pro:1}, if $\eta+B_0(0)\mu =\al_0 B_0(0)$, we have that {\small
$$
     \sum_{j=0}^{n-1} \al_j Z_n^j
         \ba{c}
       B_0(0)  \\
       \f{x}{2!}B_2(0) \\
       \f{x^2}{4!}B_4(0) \\
           \cdot \\
       \f{x^{n-1}}{(2(n-1))!}B_{2(n-1)}(0) \ea
   = \ba{c}
       \eta  \\
       \f{x}{2!}f_1 \\
       \f{x^2}{4!}f_2 \\
           \cdot \\
       \f{x^{n-1}}{(2(n-1))!}f_{n-1} \ea
     + B_0(0)
        \ba{c}
           \mu \\
            \al_1 \\
            \al_2 \\
            \cdot \\
            \al_{n-1} \ea,
$$}
or, more precisely, that {\small
$$
   (I + \f{2}{8!3}x^3 Z^3 + \f{2}{14!5}x^6 Z^6 +
     \f{2}{20!7}x^9 Z^9 +\ldots)
   \ba{c}
       B_0(0) \\
      \f{x}{2!} B_2(0) \\
      \f{x^2}{4!} B_4(0) \\
      \f{x^3}{6!} B_6(0) \\
      \f{x^4}{8!} B_8(0) \\
      \f{x^5}{10!} B_{10}(0) \\
      \f{x^6}{12!} B_{12}(0) \\
          \vdots \ea
  = \ba{c}
       1 \\
     \f{x}{2!}\f{1}{6} \\
     \f{x^2}{4!}(-\f{1}{30}) \\
     \f{x^3}{6!}\f{1}{42}+\f{2x^3}{8!3} \\
     \f{x^4}{8!}\f{1}{45} \\
     \f{x^5}{10!}(-\f{1}{132}) \\
     \f{x^6}{12!} \f{4}{455}+\f{2x^6}{14!5} \\
     \f{x^7}{14!} \f{1}{120} \\
     \f{x^8}{16!} (-\f{1}{306}) \\
     \f{x^9}{18!} \f{3}{665} + \f{2x^9}{20!7} \\
     \f{x^{10}}{20!} \f{1}{231} \\
     \f{x^{11}}{22!} (-\f{1}{552}) \\
        \vdots \ea
   = \ba{c}
       \f{1}{0!} ( \f{1}{1\cdot 1} ) \\
       \f{x}{2!} ( \f{1}{3\cdot 2} ) \\
       \f{x^2}{4!} ( \f{1}{5\cdot 3} - \f{1}{5\cdot 2} ) \\
       \f{x^3}{6!} ( \f{1}{7\cdot 4} ) \\
       \f{x^4}{8!} ( \f{1}{9\cdot 5} ) \\
       \f{x^5}{10!} ( \f{1}{11\cdot 6} - \f{1}{11\cdot 4} ) \\
       \f{x^6}{12!} ( \f{1}{13\cdot 7} ) \\
       \f{x^7}{14!} ( \f{1}{15\cdot 8} ) \\
       \f{x^8}{16!} ( \f{1}{17\cdot 9} - \f{1}{17\cdot 6} ) \\
       \f{x^9}{18!} ( \f{1}{19\cdot 10} ) \\
       \f{x^{10}}{20!} ( \f{1}{21\cdot 11} ) \\
       \f{x^{11}}{22!} ( \f{1}{23\cdot 12} -\f{1}{23\cdot 8} ) \\
        \vdots \ea .
$$ }
The latter equality is a clever remark that allows us to prove that $D_x\b$ must solve
the following Ramanujan l.t.T. system of type I:
\begin{equation}\label{ramanujan-system-def}
     \sum_{j=0}^{+\infty} \al_j Z^j D_x\b = D_x\q^R ,
\end{equation}
$$
 \al_j=\de_{j=0\mod 3}\f{2x^j}{(2j+2)!(\f{2}{3}j+1)},\
  q_j^R = \f{1}{(2j+1)(j+1)} ( 1-\de_{j=2\mod 3}\f{3}{2} ),\
   j=0,1,2,3,\ldots\ .
$$
Note that from the explicit expression of $\q^R$ just obtained, it follows
an explicit expression for the entries $f_i$ of the original Ramanujan system \eqref{ramanujan-system}, i.e.
$$
   f_i = \f{1}{(2i+1)(i+1)}
( 1 - \de_{i=2\mod 3}\f{3}{2} - \de_{i=0\mod 3}\f{ 1 }{ \f{2}{3}i+1 } ),\
   \ i=1,2,3,\ldots .
$$
Note also that \eqref{star} can be rewritten as
$$
    \sum_{j=0}^{n-2} \al_j Z_{n-1}^j I^2_n D_x\b
        = \diag(z_i,\,i=1,2,\ldots,n-1)I^2_n D_x\q^R
$$
for suitable $z_i$ (the meaning of $I^2_n$ is clear from the context).
Such $z_i$ are easily obtained by imposing the equality
$$
   (1-\de_{i=2\mod 3}\f{3}{2}-\de_{i=0\mod 3}\f{1}{\f{2}{3}i+1}) = z_i ( 1-\de_{i=2\mod 3}\f{3}{2} ),
$$
which leads to the formula:
$$
z_i = 1 - \f{ \de_{i=0\mod 3} \f{ 1 }{ \f{2}{3}i+1 } }{ 1-\de_{i=2\mod 3}\f{3}{2} }
    =  1 - \de_{i=0\mod 3} \f{ 1 }{ \f{2}{3}i+1 } .
$$
So, the l.t.T. type I and type II systems \eqref{typeI}, \eqref{typeII} and \eqref{typeR} hold.

Now let us consider the finite versions of the even and odd systems \eqref{pari} and \eqref{dispari},
$$
   \sum_{j=0}^{n-1} \f{2x^j}{(2j+2)!} Z_n^j I^1_n D_x\b = I^1_n D_x\q^e,\ \ \
   \sum_{j=0}^{n-1} \f{x^j}{(2j+1)!} Z_n^j I^1_n D_x\b = I^1_n D_x\q^o,
$$
and apply to them Proposition \ref{pro:1}: {\small
$$
     \sum_{j=0}^{n-2} \f{2x^j}{(2j+2)!} Z_{n-1}^j
         \ba{c}
       \f{x}{2!}B_2(0) \\
       \f{x^2}{4!}B_4(0) \\
           \cdot \\
       \f{x^{n-1}}{(2(n-1))!}B_{2(n-1)}(0) \ea
=    \ba{c}
       \f{x}{2!} \f{1}{3} \\
       \f{x^2}{4!} \f{1}{5} \\
           \cdot \\
       \f{x^{n-1}}{(2(n-1))!} \f{1}{2n-1} \ea
   - B_0(0) \ba{c}
              \f{2x}{4!} \\
              \f{2x^2}{6!} \\
              \cdot  \\
              \f{2x^{n-1}}{(2n)!} \ea
  =  \ba{c}
           x \f{2}{4!} \\
           x^2 \f{4}{6!} \\
           x^3 \f{6}{8!} \\
           \cdot\\
           x^{n-1} \f{2(n-1)}{(2n)!} \ea ,
$$
$$
     \sum_{j=0}^{n-2} \f{x^j}{(2j+1)!} Z_{n-1}^j
         \ba{c}
       \f{x}{2!}B_2(0) \\
       \f{x^2}{4!}B_4(0) \\
           \cdot \\
       \f{x^{n-1}}{(2(n-1))!}B_{2(n-1)}(0) \ea
=   \ba{c}
       \f{x}{2!} \f{1}{2} \\
       \f{x^2}{4!} \f{1}{2} \\
           \cdot \\
       \f{x^{n-1}}{(2(n-1))!} \f{1}{2} \ea
   - B_0(0) \ba{c}
              \f{x}{3!} \\
              \f{x^2}{5!} \\
              \cdot  \\
              \f{x^{n-1}}{(2n-1)!} \ea
  =  \ba{c}
           x \f{1}{3!2} \\
           x^2 \f{3}{5!2} \\
           x^3 \f{5}{7!2} \\
           \cdot\\
           x^{n-1} \f{2n-3}{(2n-1)!2} \ea .
$$ }
From the above identities it follows that{\small
$$
     \sum_{j=0}^{n-2} \f{2x^j}{(2j+2)!} Z_{n-1}^j
         \ba{c}
       \f{x}{2!}B_2(0) \\
       \f{x^2}{4!}B_4(0) \\
           \cdot \\
       \f{x^{n-1}}{(2(n-1))!}B_{2(n-1)}(0) \ea
   = 2\ba{c}
       \f{x}{2!} \f{1}{4\cdot 3} \\
       \f{x^2}{4!} \f{2}{6\cdot 5} \\
       \f{x^3}{6!} \f{3}{8\cdot 7} \\
           \cdot \\
       \f{x^{n-1}}{(2(n-1))!} \f{n-1}{2n(2n-1)} \ea
   = \diag(\f{i}{i+1},i=1\ldots n-1) I^2_{n}  D_x\q^e  ,
$$
$$
     \sum_{j=0}^{n-2} \f{x^j}{(2j+1)!} Z_{n-1}^j
         \ba{c}
       \f{x}{2!}B_2(0) \\
       \f{x^2}{4!}B_4(0) \\
           \cdot \\
       \f{x^{n-1}}{(2(n-1))!}B_{2(n-1)}(0) \ea
=  \ba{c}
       \f{x}{2!} \f{1}{2\cdot 3} \\
       \f{x^2}{4!} \f{3}{2\cdot 5} \\
       \f{x^3}{6!} \f{5}{2\cdot 7} \\
           \cdot \\
       \f{x^{n-1}}{(2(n-1))!} \f{2n-3}{2(2n-1)} \ea
   = \diag(\f{2i-1}{2i+1},i=1\ldots n-1) I^2_{n} D_x\q^o .
$$ }
So, also even and odd type II linear systems \eqref{typeII}, \eqref{typeEven} and \eqref{typeOdd} hold.
\end{proof}

\subsection{On the need of a new algorithm for the solution of l.t.T. linear systems}

Now it is clear that the first $n$ Bernoulli numbers $b_i$, unless the factors $(D_x)_{ii}$, solve lower triangular Toeplitz systems $A\x = \vf$, where $A$ is the $n\times n$ upper left submatrix of the semi-infinite matrix $L(\a)$ in \eqref{typeI} (or \eqref{typeII}). Of course one can compute the $(D_x\b)_i$ via the algorithm described in Section \ref{sec:algorithm}, well defined for $n=2^s$.

By representing the first column of the lower triangular Toeplitz matrix $A$ in a row, the first part of such algorithm, i.e. the part in which $A$ is transformed into the identity matrix, can be schematized through the following steps:
$$
   \bi{cccccccccccccccccccc}
   1&*&*&*&*&*&*&*&*&*&*&*&*&*&*&*&*&*&\cdot&\ \to \\
   1&{\it 0}&*&{\it 0}&*&{\it 0}&*&{\it 0}&*&{\it 0}&*&{\it 0}&*&{\it 0}&*&{\it 0}&*&{\it 0}&\cdot&\ \to \\
   1&0&{\it 0}&0&*&0&{\it 0}&0&*&0&{\it 0}&0&*&0&{\it 0}&0&*&0&\cdot&\ \to \\
   1&0&0&0&{\it 0}&0&0&0&*&0&0&0&{\it 0}&0&0&0&*&0&\cdot&\ \to \\
   1&0&0&0&0&0&0&0&{\it 0}&0&0&0&0&0&0&0&*&0&\cdot&\ \to \\
    \cdot &\cdot & \cdot& \cdot& \cdot& \cdot& \cdot& \cdot& \cdot& \cdot& \cdot& \cdot& \cdot& \cdot& \cdot& \cdot& \cdot& \cdot& \cdot& \cdot \ei  \eqno(O(n\log_2 n))
$$
(four steps if, for example, $n=16$).

It is clear that the algorithm works very well if applied to the even and odd type I and type II l.t.T. systems, but it does not appear the best possible algorithm to solve the Ramanujan type I and type II l.t.T. linear systems, for instance the system \eqref{ramanujan-system-def}. A better algorithm would clearly be one whose first part could be schematically represented as follows:
{\small
$$
   \bi{ccccccccccccccccccccccccccccccc}
   1&*&*&*&*&*&*&*&*&*&*&*&*&*&*&*&*&*&*&*&*&*&*&*&*&*&*&*&*&\cdot&\to \\
   1&{\it 0}&{\it 0}&*&{\it 0}&{\it 0}&*&{\it 0}&{\it 0}&*&{\it 0}&{\it 0}&*&{\it 0}&{\it 0}&*&{\it 0}&{\it 0}&*&{\it 0}&{\it 0}&*&{\it 0}&{\it 0}&*&{\it 0}&{\it 0}&*&{\it 0}&\cdot&\to \\
   1&0&0&{\it 0}&0&0&{\it 0}&0&0&*&0&0&{\it 0}&0&0&{\it 0}&0&0&*&0&0&{\it 0}&0&0&{\it 0}&0&0&*&0&\cdot&\to \\
   1&0&0&0&0&0&0&0&0&{\it 0}&0&0&0&0&0&0&0&0&{\it 0}&0&0&0&0&0&0&0&0&*&0&\cdot&\to \\
   \cdot& \cdot & \cdot& \cdot& \cdot& \cdot& \cdot& \cdot& \cdot& \cdot& \cdot& \cdot& \cdot& \cdot& \cdot& \cdot& \cdot& \cdot& \cdot& \cdot&   \cdot   \ei\eqno(O(n\log_3 n))
$$}
(three steps if, for example, $n=27$). In other words, each step would consist of nullifying $2/3$ of the still remaining nonzero diagonals, instead of nullifying half of them. Such algorithm, moreover, would require one step less when applied to the Ramanujan l.t.T. linear systems \eqref{typeI}, \eqref{typeII}, \eqref{typeR}, \eqref{ramanujan-system-def}.

Now, it is possible to introduce such algorithm, well defined for $n=3^s$; it is presented in the following last section of this work. Then, in Appendix C, a general l.t.T. linear system solver is described, well defined when $n=b^s$, which includes the previously shown cases $n=2^s$, $n=3^s$ as particular cases.

\section{An algorithm for solving a system of $n$ linear lower triangular Toeplitz equations, with $n$ power of $3$}

In this section it is shown an algorithm which computes $\x$ such that $A\x=\vf$, being $A$ a lower triangular $n\times n$ Toeplitz matrix with $n$ power of $3$ and $[A]_{11}=1$. Its computational cost is $O(n\log_3 n)$.

We need to rewrite Lemma \ref{lemma2} in a version suitable for the case $n$ power of $3$.

Given a vector $\v=[v_0\,v_1\,v_2\,\cdot\,]^T\in\CC^\NN$, let $E$ be the semi-infinite $0$-$1$ matrix which maps $\v$ into the vector
$E\v=[v_0\, 0\, 0\, v_1\, 0\, 0\, v_2\, 0\, 0\, \cdot\,]^T$:
$$
     E = \ba{cccc}
            1 &  &  &    \\
            0 &  &  &    \\
            0 &  &  &    \\
            0 & 1&  &    \\
            0 & 0&  &    \\
            0 & 0&  &    \\
            0 & 0& 1&    \\
           \cdot & \cdot & \cdot & \cdot \ea.
$$
In other words, the action of $E$ over $\v$ has the effect of introducing two zeros between two successive components of $\v$. Observe that
$$
    E^2 = \ba{cccc}
           1 &  &  &  \\
           0 &  &  &  \\
           0 &  &  &  \\
           0 &  &  &  \\
           0 &  &  &  \\
           0 &  &  &  \\
           0 &  &  &  \\
           0 &  &  &  \\
           0 &  &  &  \\
           0 & 1&  &  \\
           \cdot & \cdot & \cdot & \cdot \ea,\ \
    E^s = \ba{cccc}
            1   &    &  &    \\
            \vn &    &  &    \\
            0   & 1  &  &    \\
            \vn & \vn&  &    \\
            0   & 0  & 1&    \\
           \cdot & \cdot & \cdot & \cdot \ea,\ \vn=\vn_{3^s-1},
$$
that is, the action of  $E^s$ over $\v$ has the effect of introducing $3^s-1$ zeros between two successive components of $\v$.

\begin{lemma}\label{lemma2_npower3}Let $\u$, $\v$ be vectors of $\CC^\NN$ with $u_0=v_0=1$. Then $L(E\u)E\v = EL(\u)\v$, and, more in general, for any $s\in\NN$ we have $L(E^s\u)E^s\v = E^sL(\u)\v$.
\end{lemma}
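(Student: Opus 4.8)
The plan is to reproduce the argument of Lemma \ref{lemma2} verbatim, the only change being that here $E$ inserts \emph{two} zeros rather than one. First I would establish the base identity $L(E\u)E\v = EL(\u)\v$ by a direct entrywise comparison of the two sides, regarded as semi-infinite matrix-vector products. Indexing components from $0$, the matrix $E$ sends the $j$-th component of a vector to position $3j$, so $(E\v)_k$ equals $v_{k/3}$ when $3\mid k$ and vanishes otherwise; likewise the Toeplitz entry $[L(E\u)]_{ik}=(E\u)_{i-k}$ is nonzero only when $3\mid(i-k)$ and $i\geq k$, in which case it equals $u_{(i-k)/3}$.

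I would then compute the left-hand side. Since $(E\v)_k$ is supported on multiples of $3$, one gets $\big(L(E\u)E\v\big)_i=\sum_{\ell\geq 0}[L(E\u)]_{i,3\ell}\,v_{\ell}$, and the Toeplitz constraint $3\mid(i-3\ell)$ forces $3\mid i$. Hence the output vanishes unless $i=3m$, and for $i=3m$ the surviving terms are $\ell=0,\dots,m$ with $[L(E\u)]_{3m,3\ell}=u_{m-\ell}$, so that $\big(L(E\u)E\v\big)_{3m}=\sum_{\ell=0}^{m}u_{m-\ell}v_{\ell}=[L(\u)\v]_m$. On the other hand $EL(\u)\v$ places $[L(\u)\v]_m$ at position $3m$ and zeros elsewhere, matching the left side term by term. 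Note that each entry here is a finite convolution, so there is no convergence issue in the semi-infinite setting.

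For the general statement I would argue by induction on $s$, peeling off one factor of $E$ at a time exactly as in Lemma \ref{lemma2}. Assuming $L(E^s\u)E^s\v=E^sL(\u)\v$ for all admissible pairs, I apply it to $E\u$ and $E\v$, which are again admissible because $(E\u)_0=u_0=1$ and $(E\v)_0=v_0=1$; this gives $L(E^{s+1}\u)E^{s+1}\v=E^s\big(L(E\u)E\v\big)$. Invoking the base identity to rewrite $L(E\u)E\v=EL(\u)\v$ then yields $E^{s}\cdot EL(\u)\v=E^{s+1}L(\u)\v$, completing the step.

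The only point requiring care—and the sole real obstacle—is the index bookkeeping in the base step: one must check that the period-$3$ sparsity of $L(E\u)$ (nonzero diagonals only at indices divisible by $3$) meshes with the period-$3$ support of $E\v$ so as to reproduce precisely the period-$3$ support of $Ey$ with $y=L(\u)\v$. Once this alignment is verified, the rest of the proof is purely formal, and the hypothesis $u_0=v_0=1$ serves only to guarantee that the vectors produced at each stage of the induction remain admissible.
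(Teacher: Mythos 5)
Your proof is correct and follows essentially the same route as the paper: the paper's proof of Lemma \ref{lemma2_npower3} simply says to proceed as in Lemma \ref{lemma2}, whose argument is exactly your two steps — verify $L(E\u)E\v = EL(\u)\v$ by direct inspection of the entries, then obtain the general case by applying that identity to $E\u$, $E\v$ and left-multiplying by $E$ (your induction on $s$). Your explicit index bookkeeping for the base identity, and your observation that $u_0=v_0=1$ is needed only for admissibility along the iteration, just make precise what the paper leaves to inspection.
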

\begin{proof}Proceed as in the case $n$ power of $2$.
\end{proof}

\subsection{The algorithm}
Let $A$ be a l.t.T. $n \times n$ matrix with $n$ power of $3$ and $[A]_{11}=1$. We want to solve the system $A\x=\vf$ by a procedure consisting of two parts:
\begin{itemize}
\item[1]
Compute the first column of the l.t.T. $n\times n$ matrix $A^{-1}$, i.e. solve the particular l.t.T. linear system $A\x=\e_1$ by using the algorithm of computational cost $O(n\log_3 n)$ shown in the following section, based upon the successive application of Lemmas \ref{lemma1} and \ref{lemma2_npower3}
\item[2]
Compute the l.t.T. matrix vector product $A^{-1}\vf$ performing no more than $O(n\log_3 n)$ arithmetic operations (see Appendices A and B).
\end{itemize}

\subsection{The computation of the first column of the inverse of a l.t.T. $n\times n$ matrix with $n$ power of $3$}

For the sake of simplicity let us present the algorithm for the computation of $\x$ such that $A\x = \e_1$ when $n=9$, underlining, however, what are the significant changes in the general case $n=3^s$, $s\in \NN$. See the Appendix C, if interested in the details of the general case. The algorithm is similar to the one shown for $n$ power of $2$. The overall cost $O(n\log_3 n)$ of the algorithm comes from the fact that, at each step of the first part, $2/3$ of the nonzero diagonals are nullified, and from the fact that the second part can be simplified by noting that the vector $\e_1$ has only one nonzero component.

First of all observe that the $9\times 9$ matrix $A$ can be seen as the upper left  submatrix of a semi-infinite lower triangular Toeplitz matrix $L(\a)$ whose first column is
$[1\, a_1\, a_2\,\cdot\, a_7\, a_8\, a_9\,\cdot\,]^T$.\\[0.2cm]
Step 1. Find $\hat\a$ such that
$$
   L(\a)\hat\a
   = \ba{cccccccccc}
      1     &      &      &      &      &      &      &      &      & \\
      a_1   & 1    &      &      &      &      &      &      &      & \\
      a_2   & a_1  &  1   &      &      &      &      &      &      & \\
      a_3   & a_2  & a_1  &  1   &      &      &      &      &      & \\
      a_4   & a_3  & a_2  &  a_1 &  1   &      &      &      &      & \\
      a_5   & a_4  & a_3  &  a_2 &  a_1 &   1  &      &      &      & \\
      a_6   & a_5  & a_4  &  a_3 &  a_2 & a_1  &  1   &      &      & \\
      a_7   & a_6  & a_5  &  a_4 &  a_3 & a_2  & a_1  &  1   &      & \\
      a_8   & a_7  & a_6  & a_5  &  a_4 &  a_3 & a_2  & a_1  &  1   &      \\
      \cdot & \cdot& \cdot& \cdot& \cdot& \cdot& \cdot& \cdot& \cdot\ea
     \ba{c}
      1 \\
      \hat a_1 \\
      \hat a_2 \\
      \hat a_3 \\
      \hat a_4 \\
      \hat a_5 \\
      \hat a_6 \\
      \hat a_7 \\
      \hat a_8 \\
      \cdot \ea
 =  \ba{c}
      1 \\
      {\it 0} \\
      {\it 0} \\
      a_1^{(1)} \\
      {\it 0} \\
      {\it 0} \\
      a_2^{(1)} \\
      {\it 0} \\
      {\it 0} \\
      \cdot \ea
= E\a^{(1)}
$$
for some $a_i^{(1)}\in\CC$ and compute such $a_i^{(1)}$. The computation of
$a_i^{(1)}$ requires, once $\hat\a$ is known, one $9\times 9$ ($3^s\times 3^s$) l.t.T. matrix vector product -- or, more precisely, three $3\times 3$ ($3^{s-1}\times 3^{s-1}$) l.t.T. matrix vector products; the computation of $\hat\a$ requires one $9\times 9$ ($3^s\times 3^s$) l.t.T matrix vector product (see \eqref{hat_a_3}).
\vskip0.1cm

Note that, due to Lemma \ref{lemma1} we have then that $L(\hat\a)L(\a)=L(E\a^{(1)})$, that is the l.t.T. matrix $L(\a)$ is transformed into a l.t.T. matrix which alternates to each nonzero diagonal two null diagonals.\\[0.2cm]
Step 2. Find $\hat\a^{(1)}$ such that
$$
   L(E\a^{(1)})E\hat\a^{(1)}
   = \ba{cccccccccc}
      1     &      &      &      &      &      &      &      &     &  \\
      0     & 1    &      &      &      &      &      &      &     &  \\
      0     & 0    &  1   &      &      &      &      &      &     &  \\
      a_1^{(1)}& 0 &  0   &   1  &      &      &      &      &     &  \\
      0     & a_1^{(1)}& 0&   0  &   1  &      &      &      &     &  \\
      0     &   0  & a_1^{(1)}& 0&   0  &   1  &      &      &     &  \\
      a_2^{(1)}& 0 & 0    & a_1^{(1)}& 0&   0  &   1  &      &     &  \\
      0     & a_2^{(1)}& 0&  0   & a_1^{(1)}& 0&   0  &   1  &     &  \\
      0     & 0    & a_2^{(1)}& 0&  0   & a_1^{(1)}& 0&   0  &   1 &  \\
      \cdot & \cdot& \cdot& \cdot& \cdot& \cdot& \cdot& \cdot& \cdot & \cdot \ea
     \ba{c}
      1 \\
      0 \\
      0 \\
      \hat a_1^{(1)} \\
      0 \\
      0 \\
      \hat a_2^{(1)} \\
      0 \\
      0 \\
      \cdot \ea
 =  \ba{c}
      1 \\
      0 \\
      0 \\
      {\it 0} \\
      0 \\
      0 \\
      {\it 0} \\
      0 \\
      0 \\
      \cdot \ea
= E^2\a^{(2)}
$$
for some $a_i^{(2)}\in\CC$ and compute such $a_i^{(2)}$. The computation of $a_i^{(2)}$ requires, once $\hat\a^{(1)}$ is known, one $3\times 3$ ($3^{s-1}\times 3^{s-1}$) l.t.T. matrix vector product -- or, more precisely, three $1\times 1$ ($3^{s-2}\times 3^{s-2}$) l.t.T. matrix vector products.
That is, no operation in our case $n=9$, where no entry $a_i^{(2)}$, $i\geq 1$, is needed.
\vskip0.1cm

Note that due to Lemma \ref{lemma1}, we have that  $L(E\hat\a^{(1)})L(E\a^{(1)})=L(E^2\a^{(2)})$,
i.e. the l.t.T. matrix  $L(\a)$ is transformed into a l.t.T. matrix which alternates to each nonzero diagonal eight null diagonals.
\vskip0.1cm

Also note that, due to Lemma \ref{lemma2_npower3}, if $L(\a^{(1)})\hat\a^{(1)}=E\a^{(2)}$ then $L(E\a^{(1)})E\hat\a^{(1)}= E^2\a^{(2)}$. The computation of $\hat\a^{(1)}$ such that
$L(\a^{(1)})\hat\a^{(1)}=E\a^{(2)}$
requires one $3\times 3$ ($3^{s-1}\times 3^{s-1}$) l.t.T. matrix vector product (see \eqref{hat_a_3}).
\vskip0.2cm

Proceed this way, if $n=3^s>9$. Otherwise the first part of the algorithm is complete.
\vskip0.2cm

Summarizing, we have shown that,
\begin{equation}\label{alg-prima-riga-npower3}
  L(E\hat\a^{(1)})L(\hat\a)L(\a)=L(E^2\a^{(2)})
\end{equation}
where the upper left $9\times 9$ submatrices of $L(\a)$ and of $L(E^2\a^{(2)})$ are the lower triangular Toeplitz matrix initially given $A$ and the identity matrix $I$, respectively,
$$
    L(\a) = \ba{ccccccc}
             1   &     &     &    &   & & \\
             a_1 & 1   &     &    &   & & \\
            \cdot&\cdot&\cdot&    &   & & \\
             a_7 &\cdot& a_1 & 1  &   & & \\
             a_8 & a_7 &\cdot& a_1& 1 & & \\
             a_9 & a_8 & a_7 &\cdot& a_1& 1 & \\
            \cdot&\cdot&\cdot&\cdot&\cdot&\cdot&\cdot\ea,\ \
L(E^2\a^{(2)}) =  \ba{cccccc}
             1   &     &     &    &   & \\
             0   & 1   &     &    &   & \\
            \cdot&\cdot&\cdot&    &   & \\
             0   &\cdot& 0   & 1  &   & \\
             a_1^{(2)} & 0   &\cdot& 0  & 1 & \\
            \cdot&\cdot&\cdot&\cdot&\cdot&\cdot\ea,
$$
and the operations we did so far are: two products $9\times 9$ l.t.T. matrix $\cdot$ vector $+$ one product $3\times 3$ l.t.T. matrix $\cdot$ vector (if $A$ were $n\times n$ with $n=3^s$ the operations required would have been: two products $3^s\times 3^s$ l.t.T. matrix $\cdot$ vector $+\ldots+$ two products $9\times 9$ l.t.T. matrix $\cdot$ vector $+$ one product $3\times 3$ l.t.T. matrix $\cdot$ vector).

Now let us move to our purpose, compute the first column of $A^{-1}$, and thus let us show the second part of the algorithm.
Consider the following semi-infinite linear system:
\begin{equation}\label{semi-infinite-npower3}
     L(\a) \z = E \v
\end{equation}
where $\v$ is a generic semi-infinite vector in $\CC^\NN$
(if $A$ is $n\times n$ with $n = 3^s$, then the matrix $E$ in \eqref{semi-infinite-npower3} must be raised to the power $s-1$). Such system can be rewritten as follows
$$
     \ba{cc}
        A  &  O  \\
        \vdots & \ddots \ea
     \ba{c}
        \{\z\}_9 \\
          z_9 \\
          \cdot \ea
    = \ba{c}
         v_0 \\ 0 \\ 0 \\ v_1 \\ 0 \\ 0 \\ v_2 \\ 0 \\ 0 \\ v_3 \\ \cdot \ea ,
$$
that is, pointing out the upper part of the system, consisting of only $9$ equations.
Before proceeding further, let us note that $\{\z\}_9$ is such that $A \{\z\}_9 = [ v_0 \ 0 \ 0  \ v_1 \ 0 \ 0 \ v_2 \ 0 \ 0]^T$, $v_0,v_1,v_2\in\CC$. Therefore the choices $v_0=1$ and $v_1=v_2=0$, would make $\{\z\}_9$ equals to the vector we are looking for, $A^{-1}\e_1$.

By using the identity \eqref{alg-prima-riga-npower3} one immediately observes that the system $L(\a) \z = E \v$ is equivalent to the following one:
$$
  \ba{cc}
     I_9  &  O \\
     \vdots & \ddots \ea
  \ba{c}
     \{\z\}_9 \\
     \vdots \ea
  = L(E^2\a^{(2)}) \z
= L(\hat\a) L(E\hat\a^{(1)}) E\v .
$$
Due to Lemma \ref{lemma2_npower3} we can rewrite the right hand side in a more convenient way:
$$
  L(\hat\a) L(E\hat\a^{(1)}) E\v
  = L(\hat\a) E L(\hat\a^{(1)}) \v .
$$
Therefore, the following identity holds:
$$
  \ba{cc}
     I_9  &  O \\
     \vdots & \ddots \ea
  \ba{c}
     \{\z\}_9 \\
     \vdots \ea
  = L(\hat\a) E L(\hat\a^{(1)}) \v .
$$
The matrices involved on the right hand side are all lower triangular. Moreover the upper left square submatrices of E of dimensions $9\times 9$, $3\times 3$ have $2/3$ of its columns null,
$$
     \{E\}_3 = \ba{c|cc}
                    1 & 0 & 0 \\
                    0 & 0 & 0 \\
                    0 & 0 & 0 \ea ,\ \
       \{E\}_9 = \ba{ccc|cccccc}
                    1 & 0 & 0 & 0 & 0 & 0 & 0 & 0 & 0\\
                    0 & 0 & 0 & 0 & 0 & 0 & 0 & 0 & 0\\
                    0 & 0 & 0 & 0 & 0 & 0 & 0 & 0 & 0\\
                    0 & 1 & 0 & 0 & 0 & 0 & 0 & 0 & 0\\
                    0 & 0 & 0 & 0 & 0 & 0 & 0 & 0 & 0\\
                    0 & 0 & 0 & 0 & 0 & 0 & 0 & 0 & 0\\
                    0 & 0 & 1 & 0 & 0 & 0 & 0 & 0 & 0\\
                    0 & 0 & 0 & 0 & 0 & 0 & 0 & 0 & 0\\
                    0 & 0 & 0 & 0 & 0 & 0 & 0 & 0 & 0\ea .
$$
These two observations let us obtain an effective representation of $\{\z\}_9$:
$$
  \{\z\}_9 = \{ L(\hat\a) \}_9 \{ E \}_9 \{ L(\hat\a^{(1)}) \}_9 \{ \v \}_9
           = \{ L(\hat\a) \}_9 \{ E \}_{9,3} \{ L(\hat\a^{(1)}) \}_3 \{ \v \}_3.
$$
By using such formula, when $v_0=1$, $v_1=v_2=0$, the vector $\{\z\}_9$ can be computed by performing a $9\times 9$ l.t.T. matrix vector product (if $A$ is $n\times n$ with $n=3^s$ the operations required would have been one product $9\times 9$ l.t.T. matrix $\cdot$ vector $+\ldots+$ one product $3^s\times 3^s$ l.t.T. matrix $\cdot$ vector), that is, about the same amount of operations required by the {\it Gaussian elimination} implemented in the first part of the algorithm.

In conclusion, if $cj3^j$ is an upper bound for the cost of the product $3^j\times 3^j$ l.t.T. matrix $\cdot$ vector, then the overall cost of the shown algorithm is
$\tilde c\sum_{j=2}^s j3^j = O(s3^s) = O(n\log_3 n)$, in case the dimension of the l.t.T. system is $n=3^s$.

Finally observe that a formula more explicit than \eqref{hat_a_3} can be given for the entries of a vector $\hat \a$ such that $L(\a)\hat\a=E\a^{(1)}$. It is reported here below:
{\footnotesize
$$
   \bi{l}
  L(\a)\hat \a \\
  \ \ = \ba{ccccccccccccc}
     1   &        &     &     &     &     &     &     &     &     &     &     &   \\
     a_1 & 1      &     &     &     &     &     &     &     &     &     &     &   \\
     a_2 & a_1    & 1   &     &     &     &     &     &     &     &     &     &   \\
     a_3 & a_2    & a_1 & 1   &     &     &     &     &     &     &     &     &   \\
     a_4 & a_3    & a_2 & a_1 & 1   &     &     &     &     &     &     &     &   \\
     a_5 & a_4    & a_3 & a_2 & a_1 & 1   &     &     &     &     &     &     &   \\
     a_6 & a_5    & a_4 & a_3 & a_2 & a_1 & 1   &     &     &     &     &     &   \\
     a_7 & a_6    & a_5 & a_4 & a_3 & a_2 & a_1 & 1   &     &     &     &     &   \\
     a_8 & a_7    & a_6 & a_5 & a_4 & a_3 & a_2 & a_1 & 1   &     &     &     &   \\
     a_9 & a_8    & a_7 & a_6 & a_5 & a_4 & a_3 & a_2 & a_1 & 1   &     &     &   \\
  a_{10} & a_9    & a_8 & a_7 & a_6 & a_5 & a_4 & a_3 & a_2 & a_1 & 1   &     &   \\
  a_{11} & a_{10} & a_9 & a_8 & a_7 & a_6 & a_5 & a_4 & a_3 & a_2 & a_1 & 1   &   \\
    \cdot &\cdot &\cdot &\cdot &\cdot &\cdot &\cdot &\cdot &\cdot &\cdot &\cdot &\cdot &\cdot \ea
  \ba{c}
     1 \\
     -a_1 \\
     -a_2+a_1^2  \\
     2a_3-a_1a_2  \\
     -a_4-a_1a_3+a_2^2  \\
     -a_5+2a_1a_4-a_2a_3  \\
     2a_6-a_1a_5-a_2a_4+a_3^2  \\
     -a_7-a_1a_6+2a_2a_5-a_3a_4  \\
     -a_8+2a_1a_7-a_2a_6-a_3a_5+a_4^2  \\
     2a_9-a_1a_8-a_2a_7+2a_3a_6-a_4a_5  \\
     -a_{10}-a_1a_9+2a_2a_8-a_3a_7-a_4a_6+a_5^2  \\
     -a_{11}+2a_1a_{10}-a_2a_9-a_3a_8+2a_4a_7-a_5a_6  \\
     \cdot  \ea   \\
  \ \ =   \ba{ccccccccccccc}
     1 &
     0 &
     0  &
     a_1^{(1)} &
     0  &
     0  &
     a_2^{(1)} &
     0  &
     0  &
     a_3^{(1)} &
     0  &
     0  &
     \cdot  \ea^T = E\a^{(1)},  \ei
$$ }
\begin{equation}\label{hat_a_3sc}
   \hat a_i = - \sum_{r=0}^{ \lfloor\f{i-1}{2}\rfloor } a_r a_{i-r} + \de_{i=0\mod 2} a_{\f{i}{2}}^2
       + 3\left\{ \bi{ll}
                     \sum_{s\geq\f{3-i}{6}}^0  a_{\f{i-3}{2}+3s}a_{\f{i+3}{2}-3s}  &  i \ \hbox{odd} \\
                     \sum_{s\geq\f{6-i}{6}}^0  a_{\f{i-6}{2}+3s}a_{\f{i+6}{2}-3s}  &  i \ \hbox{even} \ei \right. ,\ \ i=0,1,2,3,4,5,\ldots .
\end{equation}
Such formula for $\hat a_i$ was found by us looking for the simplest vector $\hat\a$ such that $(L(\a)\hat \a)_i=0$, $i=2,3,5,6,8,9,\ldots$, and it was found before knowing the existence of Theorem \ref{teo:polinomio} and of its easy consequence \eqref{hat_a_3}. As anyone can realize, \eqref{hat_a_3sc} is just the scalar version of
formula \eqref{hat_a_3}.

May be an explicit formula could be given also for the entries of the corresponding vector $\a^{(1)}$. For instance we have:
$$
\bi{l}
a_1^{(1)} = 3a_3-3a_1a_2+a_1^3,\ \
a_2^{(1)} = 3a_6-3a_1a_5-3a_2a_4+3a_3^2-3a_1a_2a_3+3a_1^2a_4+a_2^3,\\
a_3^{(1)} = 3a_9-3a_1a_8-3a_2a_7+6a_3a_6-3a_1a_2a_6-3a_1a_3a_5-3a_2a_3a_4+3a_1^2a_7
+3a_1a_4^2-3a_4a_5+3a_5a_2^2+a_3^3. \ei
$$
The reader could try to obtain the expression for the generic $a_i^{(1)}$ in terms of the $a_j$, i.e. the scalar version of the vector identity $E\a^{(1)}=L(\a)\hat\a=L(\a)L(\c)\d$, $c_k=a_kt^k$, $d_k=a_kt^{2k}$, $t=\exp{\ui 2\pi/3}$.
\vskip0.2cm

{\bf A concluding remark}

We conclude with a remark on the history of the results enclosed in this work. Once the l.t.T. even and odd systems \eqref{pari}, \eqref{dispari} were obtained, we tried to exploit them to retrieve by linear algebra arguments the sparse system, solved by Bernoulli numbers, we observed in the paper \cite{Ramanujan} of Ramanujan (see \eqref{ramanujan-system}). In order to do that, first of all it was necessary to nullify the second, third, fifth, sixth, eighth, ninth, and so on, diagonals of our even and odd systems. So, we naturally conceived the l.t.T. linear systems solvers here presented, and, in particular, the one nullifying at each step $2/3$ of the remaining non null diagonals. Note that our original aim, i.e. write an explicit formula for the vectors $\w^{eR},\w^{oR}\in\CC^{\NN}$ such that $L(\a^e)\w^{eR}=L(\a^o)\w^{oR}=\a^R$, with $\a^R,\a^e,\a^o$ defined in \eqref{typeR}, \eqref{typeEven}, \eqref{typeOdd}, has not been reached in this work. We leave to the reader the interesting exercise to find the vectors $\w^{eR}$ and $\w^{oR}$.

\section*{Appendix A. The l.t.T. matrix-vector product}\label{appendix1}

The product of a $n\times n$ lower triangular Toeplitz matrix times a vector can be computed with  much less than the $n(n+1)/2$ multiplications and $(n-1)n/2$ additions required by the obvious algorithm. The two alternative algorithms here described use the strong relation existing between Toeplitz matrices and the circulant and $(-1)$-circulant \cite{DFTcirc} matrix algebras in order to perform the operation l.t.T. matrix $\cdot$ vector via a small number of \emph{discrete Fourier transforms}, and thus in $O(n\log n)$ arithmetic operations.

\subsubsection*{Preliminaries}
Let $\Pi_{\pm 1}$ be the $n\times n$ matrix $\Pi_{\pm 1} = Z^T \pm \e_n\e_1^T$, where $Z$ is the $n\times n$ lower-shift matrix. Then
\begin{equation}\label{circ}
      \Pi_1 = F D_{1\om^{n-1}} F^* ,\ \ \
      \Pi_{-1} = (D_{1\rho^{n-1}}F) \rho D_{1\om^{n-1}} (D_{1\rho^{n-1}}F)^*
\end{equation}
where $F$ is the following (symmetric) unitary Fourier matrix
$$
F = \f{1}{\sqrt{n}} W,\ \
    W=(\om^{(i-1)(j-1)})_{i,j=1}^n,\ \ \om\ \hbox{such that}\
       \om^n=1,\,\om^i\neq 1,\,0<i<n ,
$$
$D_{1\om^{n-1}}=\diag(1,\om,\ldots,\om^{n-1})$, $\rho$ is such that $\rho^n=-1$, $\rho^i\neq -1,\,0<i<n$, and $D_{1\rho^{n-1}}=\diag(1,\rho,\ldots,\rho^{n-1})$.

From \eqref{circ} it follows that for the circulant and $(-1)$-circulant matrices whose first row is  $\a^T=[a_1\,a_2\cdots a_n]$, that is for the matrices
$C(\a):=\sum_{k=1}^n a_k\Pi_1^{k-1}$ and $C_{-1}(\a):=\sum_{k=1}^n a_k\Pi_{-1}^{k-1}$,
the following representations hold
$$
C(\a) = F d(F^T\a)d(F^T\e_1)^{-1} F^*,\ \ \
C_{-1}(\a) = F_- d(F_-^T\a)d(F_-^T\e_1)^{-1} F_-^*,\ \ F_- = D_{1\rho^{n-1}}F,
$$
where $d(\z)$ denotes the diagonal matrix whose diagonal elements are the entries of the vector $\z$.

Given $\z\in\CCn$, the matrix-vector product $F\z$  is called discrete Fourier transform (DFT) of $\z$. Note that the Fourier matrix satisfies the equalities
$F^2=J\Pi_1$ and $F^*=J\Pi_1 F$, where $J$ is the counter-identity, i.e. the permutation matrix obtained by reversing the columns of the identity matrix. So, the inverse discrete Fourier transform of a vector $\z$, $F^*\z$, is simply a permutation of the DFT of $\z$.
The DFT of $\z$ can be performed through a method, known as FFT, whose computational cost is $O(n\log_b n)$, when $n$ is a power of a number $b$ (see Appendix B). It follows that the same order of arithmetic operations is enough to compute the matrix-vector products $C(\a)\z$ e $C_{-1}(\a)\z$, for any $\a\in\CCn$.
\vskip0.1cm

We are now ready to illustrate two procedures for the computation of the product of a Toeplitz matrix $T=(t_{i-j})_{i,j=1}^n$ times a vector. Obviously such procedures can be applied to our case, where  $t_{k}=0$, $k<0$. We stress the fact that more efficient methods for the computation of l.t.T. matrix-vector products may exist and they would be welcome, being such products the basic operations required by the algorithms presented throughout this work. In fact,
in the previous sections we have seen that the solution of a triangular Toeplitz linear system of $n$ equations, with $n$ power of $2$ $(3)$, can be reduced to the computation of $O(\log_2 n)$ ($O(\log_3 n)$) matrix-vector products, where the matrix involved is Toeplitz triangular and its dimension varies, reducing by a factor $1/2$ ($2/3$) each time. Thus it would be suitable to have a method which performs such products in the most efficient way.

\subsubsection*{Procedure I ($T$ embedded into a circulant)}

Consider a generic Toeplitz $4\times 4$ matrix $T$ and a $4\times 1$ vector $\v$. Then $T$ can be seen as the upper left submatrix of a $8\times 8$ circulant matrix $C$, and the following representation holds for the vector $T\v$:{\small
$$
     T\v = \ba{cccc}
            t_0 & t_{-1} & t_{-2} & t_{-3} \\
            t_1 & t_0    & t_{-1} & t_{-2} \\
            t_2 & t_1    & t_0    & t_{-1 }\\
            t_3 & t_2    & t_1    & t_0    \ea
           \ba{c} v_0 \\ v_1 \\ v_2 \\ v_3 \ea
         = \Big\{ \ba{cccccccc}
               t_0 & t_{-1} & t_{-2} & t_{-3} & 0 & t_3 & t_2 & t_1 \\
               t_1 & t_0 & t_{-1} & t_{-2} & t_{-3} & 0 & t_3 & t_2 \\
               t_2 & t_1 & t_0 & t_{-1} & t_{-2} & t_{-3} & 0 & t_3 \\
               t_3 & t_2 & t_1 & t_0 & t_{-1} & t_{-2} & t_{-3} & 0 \\
               0   & t_3 & t_2 & t_1 & t_0 & t_{-1} & t_{-2} & t_{-3} \\
               t_{-3} & 0& t_3 & t_2 & t_1 & t_0 & t_{-1} & t_{-2} \\
               t_{-2} & t_{-3} & 0 & t_3 & t_2 & t_1 & t_0 & t_{-1} \\
               t_{-1} & t_{-2} & t_{-3} & 0 & t_3 & t_2 & t_1 & t_0 \ea
              \ba{c} v_0 \\ v_1 \\ v_2 \\ v_3 \\ 0 \\ 0 \\ 0 \\ 0 \ea \Big\}_4
           = \big\{ C \ba{c} \v \\ \vn \ea \big\}_4
$$}
where the symbol $\{\z\}_4$ denotes the $4\times 1$ vector whose entries are the first four components of $\z$.

If $T$ is $n\times n$ and $\v$ is $n\times 1$, then the observation still holds, and can be generalized:
$$
   T\v = \big\{ C \ba{c} \v \\ \vn_{(b-1)n} \ea \big\}_n,\ \ \
   C = C(\a)
= \R{bn} F_{bn} d( F_{bn}\a) F_{bn}^H,\ \
   \a=
\ba{c} t_0 \\ t_{-1} \\ \cdot \\ t_{-n+1} \\ \vn_{(b-2)n+1} \\ t_{n-1} \\ \cdot \\ t_1 \ea.
$$
If $n$ is a power of $b$ ($b=2,3,\ldots$), from such formula one immediately deduces a procedure of cost $O(n\log_b n)$ for the computation of the product of a $n\times n$ Toeplitz matrix times a  vector (see Appendix B).

\subsubsection*{Procedure II ($T$ written as the sum of a circulant and a (-1)-circulant)}

Set $\a=[a_1\,\cdots\,a_n]^T$ and $\a^\prime=[a_1^\prime\,\cdots\,a_n^\prime]^T$ where $a_i=\um(t_{-i+1}+t_{n-i+1})$, $a_i^\prime=\um(t_{-i+1}-t_{n-i+1})$, $i=1,\ldots,n$ ($t_n=0$).
Then, the following representation holds for our Toeplitz matrix  $T=(t_{i-j})_{i,j=1}^n$:
$$
    T = C(\a) + C_{-1}(\a^\prime)
      = F d(F^T\a)d(F^T\e_1)^{-1} F^* + F_- d(F_-^T\a^\prime)d(F_-^T\e_1)^{-1} F_-^* .
$$
Again, if $n$ is a power of $b$ ($b=2,3,\ldots$), from this formula one immediately deduces a procedure of cost $O(n\log_b n)$ for the computation of the product of a  $n\times n$ Toeplitz matrix times a  vector (see Appendix B).


\section*{Appendix B. The FFT algorithm}\label{appendix2}

\begin{proposition}[(FFT)] Let $n$ be a power of $b$ ($b=2,3,\ldots$). Given $\z\in\CCn$, the DFT of $\z$ can be computed in at most $O(n\log_b n)$ arithmetic operations.
\end{proposition}
\begin{proof}
Let $n$ be such that $b|n$. Since $\om^{(i-1)(k-1)}$ is the $(i,k)$ element of $W$ and $z_k$ is the $k$-th element of $\z\in\CCn$, we have
$$
	\bi{l}
	(W\z)_i  =  \sum_{k=1}^n \om^{(i-1)(k-1)} z_k \\
	          =
                 \sum_{j=1}^{n/b} \om^{(i-1)(bj-b)} z_{bj-b+1}
	           + \sum_{j=1}^{n/b} \om^{(i-1)(bj-b+1)} z_{bj-b+2}
               + \ldots
               + \sum_{j=1}^{n/b} \om^{(i-1)(bj-b+b-1)} z_{bj-b+b}  \\
	     =
                \sum_{j=1}^{n/b} (\om^b)^{(i-1)(j-1)} z_{bj-b+1}
	         + \sum_{j=1}^{n/b} \om^{(i-1)(b(j-1)+1)} z_{bj-b+2}
               + \ldots
               + \sum_{j=1}^{n/b} \om^{(i-1)(b(j-1)+b-1)} z_{bj-b+b}\\
	       =
               \sum_{j=1}^{n/b} (\om^b)^{(i-1)(j-1)} z_{bj-b+1}
	       + \om^{i-1}\sum_{j=1}^{n/b} (\om^b)^{(i-1)(j-1)} z_{bj-b+2}
                  + \ldots
     + \om^{(i-1)(b-1)} \sum_{j=1}^{n/b} (\om^b)^{(i-1)(j-1)} z_{bj-b+b}.
	\ei
$$
Note that $\om$ is actually a function of $n$, in fact $\om$ is such that $\om^n=1$ and $\om^i\neq 1$, $0<i<n$. So, a better notation for $\om$ is $\om_n$. Then $\om^b=\om_n^b$ is such that $(\om_n^b)^{n/b}=1$ and $(\om_n^b)^{i}\neq 1$, $0<i<n/b$; in other words $\om_n^b=\om_{n/b}$ (namely  $\om_n^b$ is the $n/b$-th principal root of the unity). Thus we have the identity
	\begin{equation}\label{FFTscalar}
   \bi{rcl}
	   (W_n\z)_i & = & \sum_{j=1}^{n/b} \om_{n/b}^{(i-1)(j-1)} z_{bj-b+1}
	         + \om_n^{i-1}\sum_{j=1}^{n/b} \om_{n/b}^{(i-1)(j-1)} z_{bj-b+2} \\
                & &  + \ldots
           + \om_n^{(i-1)(b-1)} \sum_{j=1}^{n/b} (\om_{n/b})^{(i-1)(j-1)} z_{bj-b+b},\ \
	               i=1,2,\ldots,n. \ei
	\end{equation}
It follows that, for $i=1,\ldots,\f{n}{b}$,
	$$
	   (W_n\z)_i = ( W_{n/b} \ba{c} z_1 \\ z_{b+1} \\ \cdot \\ z_{n-b+1} \ea )_i
	          + \om_n^{i-1} ( W_{n/b} \ba{c} z_2 \\ z_{b+2} \\ \cdot \\ z_{n-b+2} \ea )_i
              + \ldots
      + \om_n^{(i-1)(b-1)} ( W_{n/b}\ba{c} z_b \\ z_{2b} \\ \cdot \\ z_{n} \ea )_i.
	$$
Moreover, letting $i=\f{n}{b}+k$, $k=1,\ldots,\f{n}{b}$, in \eqref{FFTscalar}, we obtain
	$$
	\bi{rcl}
	(W_n\z)_{\f{n}{b}+k} & = &
	\sum_{j=1}^{n/b} \om_{n/b}^{\f{n}{b}(j-1)}
	                   \om_{n/b}^{(k-1)(j-1)} z_{bj-b+1}
	         + \om_n^{\f{n}{b}}\om_n^{k-1}
	\sum_{j=1}^{n/b} \om_{n/b}^{\f{n}{b}(j-1)}
	                    \om_{n/b}^{(k-1)(j-1)} z_{bj-b+2}  \\
    & &          + \ldots
              + \om_n^{\f{n}{b}(b-1)}\om_n^{(k-1)(b-1)}\sum_{j=1}^{n/b}
               \om_{n/b}^{\f{n}{b}(j-1)}
	                    \om_{n/b}^{(k-1)(j-1)} z_{bj}   \\
                    \\
	        & = &
		\sum_{j=1}^{n/b}
	                   \om_{n/b}^{(k-1)(j-1)} z_{bj-b+1}
	         + \om_n^{\f{n}{b}}\om_n^{k-1}
	\sum_{j=1}^{n/b}
	                    \om_{n/b}^{(k-1)(j-1)} z_{bj-b+2}  \\
      & &        + \ldots
              + \om_n^{\f{n}{b}(b-1)}\om_n^{(k-1)(b-1)}\sum_{j=1}^{n/b}
	                    \om_{n/b}^{(k-1)(j-1)} z_{bj}\\
                  \\
	       & = &
             ( W_{n/b} \ba{c} z_1 \\ z_{b+1} \\ \cdot \\ z_{n-b+1} \ea )_k
	         + \om_n^{\f{n}{b}}\om_n^{k-1}
                 ( W_{n/b} \ba{c} z_2 \\ z_{b+2} \\ \cdot \\ z_{n-b+2} \ea )_k  \\
            &  &  + \ldots
              + \om_n^{\f{n}{b}(b-1)}\om_n^{(k-1)(b-1)}
                 ( W_{n/b} \ba{c} z_b \\ z_{2b} \\ \cdot \\ z_n  \ea  )_k,\ \
	     k=1,\ldots,\f{n}{b},   \ei
	$$
where $\om_n^{\f{n}{b}}= \om_b$. Proceeding in this way, one obtains formulas for
$(W_n\z)_{r\f{n}{b}+k}$, $r=0,1,\ldots,b-1$, $k=1,\ldots,\f{n}{b}$.
Such scalar equalities can be written in a more compact form:
	\begin{equation}\label{FFTvector}
   W_n \z = \ba{cccc}
              I & D           & \cdot & D^{b-1} \\
              I & \om_b D     & \cdot & (\om_bD)^{b-1} \\
          \cdot & \cdot       & \cdot & \cdot \\
              I & \om_b^{b-1}D & \cdot & (\om_b^{b-1}D)^{b-1} \ea
             \ba{cccc}
               W_{n/b} &         &       &  \\
                       & W_{n/b} &       &  \\
                       &         & \cdot &  \\
                       &         &       & W_{n/b} \ea Q,
	\end{equation}
where
$$
   D = \ba{cccc}
         1 &       &       &                   \\
           & \om_n &       &                   \\
           &       & \cdot &                   \\
           &       &       & \om_n^{\f{n}{b}-1}\ea ,
$$
and $Q$ is the permutation matrix such that
$$
   Q\z = [z_1\ z_{b+1}\ \cdot\ z_{n-b+1}\
          z_2\ z_{b+2}\ \cdot\ z_{n-b+2}\
          \cdots\
          z_b\ z_{2b}\ \cdot\ z_{n}]^T.
$$
By the previous formula \eqref{FFTvector}, it is clear that a $W_n$ transform is computable by performing $b$ $W_{n/b}$ transforms. So, if $c_n$ denotes the complexity of the matrix-vector product $W_n\z$, then $c_n \leq b c_{n/b} + 2(b-1) n$,
which implies $c_n=O(n\log_b n)$, if $n$ is a power of $b$.
\end{proof}

\section*{Appendix C. The detailed l.t.T. linear system solver algorithm}\label{appendix3}

Preliminary definitions:
$$
    \a = \ba{c}
           1  \\
           a_1 \\
           a_2 \\
           \cdot \ea \in \CC^{\NN},\ \
    L(\a) = \ba{cccc}
              1   &     & & \\
              a_1 & 1   & & \\
              a_2 & a_1 & 1 & \\
              \cdot & \cdot & \cdot & \cdot \ea ,
$$
$$
     E = \ba{ccccc}
            1   &  0  &  0  & \cdot & \cdot \\
            \vn & \vn & \vn & \cdot & \cdot \\
            0   &  1  &  0  & \cdot & \cdot \\
            \vn & \vn & \vn & \cdot & \cdot \\
            0   &  0  &  1  & 0     & \cdot \\
          \cdot & \cdot & \cdot & \cdot & \cdot \ea,\ \vn=\vn_{b-1}, \ \
     E^s =    \ba{ccccc}
            1   &  0  &  0  & \cdot & \cdot \\
            \vn & \vn & \vn & \cdot & \cdot \\
            0   &  1  &  0  & \cdot & \cdot \\
            \vn & \vn & \vn & \cdot & \cdot \\
            0   &  0  &  1  & 0     & \cdot \\
          \cdot & \cdot & \cdot & \cdot & \cdot \ea,\ \vn=\vn_{b^s-1}, \
$$
$$
       \u = \ba{c}
           1  \\
           u_1 \\
           u_2 \\
           \cdot \ea,\ \
     E\u = \ba{c}
             1 \\
             \vn \\
             u_1 \\
             \vn \\
             u_2 \\
             \cdot \ea,\ \vn=\vn_{b-1},\ \
     L(E\u) = \ba{cccccc}
              1   &     & & & & \\
              \vn & I   & & & & \\
              u_1 & \vn^T & 1 & & & \\
              \vn & u_1 I & \vn & I & & \\
              u_2 & \vn^T & u_1 & \vn^T & 1 & \\
              \cdot & \cdot & \cdot & \cdot & \cdot & \cdot \ea,\         \vn=\vn_{b-1}.
$$
\vskip0.2cm

A generalization of Lemmas \ref{lemma2} and \ref{lemma2_npower3}:
\vskip0.2cm

{\bf Lemma}: If $\u,\v\in \CC^{\NN}$ and $u_0=v_0=1$, then
$L(E\u) E\v = E L(\u)\v$, $L(E^s\u)E^s\v = E^s L(\u)\v$, $\fa\,s\in\NN$.
\vskip0.2cm

Now, by using the above Lemma and Lemma \ref{lemma1}, we are ready to present an
algorithm for the computation of $\x$ such that $A\x=\e_1$ where $A$ is a $n\times n$ l.t.T. matrix with $n=b^k$ and $[A]_{11}=1$. The overall cost of the algorithm is
$O(n \log_b n)$.

First of all observe that the $n\times n$ matrix $A$ can be thought as the upper-left submatrix of a semi-infinite l.t.T. matrix $L(\a)$, whose first column is
$[1\, a_1\, a_2\,\cdot\, a_{b^k-1}\, a_{b^k}\,\cdot\,]^T$.
$$
     L(\a)= \ba{cc}
             A & O \\
             \vdots & \ddots \ea
          = \ba{ccccc}
             1 & & & & \\
             a_1 & 1 & & & \\
             \cdot & \cdot & \cdot & & \\
             a_{b^{k}-1} & \cdot& a_1 & 1 & \\
             \cdot & \cdot & \cdot & \cdot & \cdot \ea,\
     A\in\CC^{b^k\times b^k},\ \
     \a^{(0)}:=\a
$$
FIRST PART:
\\[0.2cm]
\noindent
Step 1: Find $\hat\a^{(0)}$, $\a^{(1)}$ such that
$$
    L(\a^{(0)})\hat\a^{(0)}
        = E\a^{(1)} = \ba{c} 1 \\ \vn \\ a^{(1)}_1 \\ \cdot \ea,\ \vn=\vn_{b-1}, \
        \hbox{ so that }\
    L(\hat\a^{(0)})L(\a^{(0)}) = L(E\a^{(1)}).
$$
Step 2: Find $\hat\a^{(1)}$, $\a^{(2)}$ such that
$$
    L(\a^{(1)})\hat\a^{(1)}
        = E\a^{(2)} =
        \ba{c} 1 \\ \vn \\ a^{(2)}_1 \\ \cdot \ea,\ \vn=\vn_{b-1}, \ \ \hbox{ so that}
$$
$$
     L(E\a^{(1)})E\hat\a^{(1)}
        = E^2\a^{(2)} = \ba{c} 1 \\ \vn \\ a^{(2)}_1 \\ \cdot \ea,\ \vn=\vn_{b^2-1},\ \
     L(E\hat\a^{(1)}) \underline{ L(E\a^{(1)}) }
        = L(E^2\a^{(2)}).
$$
Step 3: Find $\hat\a^{(2)}$, $\a^{(3)}$ such that
$$
    L(\a^{(2)})\hat\a^{(2)}
        = E\a^{(3)} = \ba{c} 1 \\ \vn \\ a^{(3)}_1 \\ \cdot \ea,\ \vn=\vn_{b-1}, \ \
        \hbox{ so that}
$$
$$
     L(E^2\a^{(2)})E^2\hat\a^{(2)}
        = E^3\a^{(3)} = \ba{c} 1 \\ \vn \\ a^{(3)}_1 \\ \cdot \ea,\ \vn=\vn_{b^3-1},\ \
     L(E^2\hat\a^{(2)}) \underline{ L(E^2\a^{(2)}) }
        = L(E^3\a^{(3)}).
$$
$\ldots$

\noindent
Step $k$: Find $\hat\a^{(k-1)}$, $\a^{(k)}$ such that
$$
    L(\a^{(k-1)})\hat\a^{(k-1)}
        = E\a^{(k)} = \ba{c} 1 \\ \vn \\ a^{(k)}_1\\ \cdot \ea,\ \vn=\vn_{b-1}, \ \
        \hbox{ so that}
$$
$$
     L(E^{k-1}\a^{(k-1)}) E^{k-1}\hat\a^{(k-1)}
    = E^{k}\a^{(k)} = \ba{c} 1 \\ \vn \\ a^{(k)}_1 \\ \cdot \ea,\ \vn=\vn_{b^{k}-1},\ \
     L(E^{k-1}\hat\a^{(k-1)}) \underline{ L(E^{k-1}\a^{(k-1)}) }
        = L(E^{k}\a^{(k)}) .
$$
Summarizing, we have the identity
$$
     L(E^{k-1}\hat\a^{(k-1)}) L(E^{k-2}\hat\a^{(k-2)}) \,\cdots\, L(E\hat\a^{(1)}) L(\hat\a^{(0)}) L(\a^{(0)}) = L(E^k\a^{(k)})
$$
where the upper left $b^k\times b^k$ submatrices of $L(\a^{(0)})$ and of $L(E^k\a^{(k)})$ are, respectively, the initial l.t.T. matrix $A$ and the identity matrix.
\vskip0.2cm

\noindent
SECOND PART:
\vskip0.1cm

\noindent
Note that, for any $\c\in\CC^{\NN}$,
$$
    L(\a^{(0)})\z=\c \ \hbox{ iff } \
    L(E^k\a^{(k)}) \z =
    L(\hat\a^{(0)}) L(E\hat\a^{(1)}) \,\cdots\, L(E^{k-2}\hat\a^{(k-2)}) L(E^{k-1}\hat\a^{(k-1)}) \c  .
$$
Moreover, if
$$
\c=E^{k-1}\v = \ba{c}
         v_0 \\
         \vn \\
         v_1 \\
         \vn \\
         v_{2} \\
         \vn \\
         \cdot \ea, \ \vn=\vn_{b^{k-1}-1},\ \ \v\in\CC^{\NN},
$$
then :
$$
    L(\a^{(0)})\z=\c \ \hbox{ iff } \
    \ba{cc}
        {\large I}_{b^k}   & O \\
                           &   \\
       \ba{cc}
         a^{(k)}_1 & \\
                   & \cdot \ea  & \ddots  \ea \z
    = L(E^k\a^{(k)}) \z =
L(\hat\a^{(0)}) E L(\hat\a^{(1)}) E\,\cdots\, E L(\hat\a^{(k-2)}) E L(\hat\a^{(k-1)}) \v .
$$
In other words, any vector $\big\{\z\big\}_n$, $n=b^k$, such that
$$
    A\{\z\}_n = \{L(\a)\}_n \{\z\}_n
     =  \ba{c}
         v_0 \\
         \vn \\
         v_1 \\
         \vn \\
         \cdot \\
         v_{b-1} \\
         \vn \ea, \ \vn=\vn_{b^{k-1}-1}
$$
(for example, if $v_0=1,\,v_i=0$ $i\geq 1$, the vector we are looking for, $A^{-1}\e_1$),
can be represented as follows
$$
   \bi{rcl}
    \{ \z \}_n & = & \{  L(\hat\a^{(0)})  \}_n  \{  E  \}_n
              \{  L(\hat\a^{(1)})  \}_n  \{  E  \}_n \,\cdots\,
              \{  L(\hat\a^{(k-2)})  \}_n  \{  E  \}_n
              \{  L(\hat\a^{(k-1)})  \}_n   \{\v\}_n  \\
            & = & \{  L(\hat\a^{(0)})  \}_n  \{  E  \}_{n,\f{n}{b}}
              \{  L(\hat\a^{(1)})  \}_{\f{n}{b}}  \{  E  \}_{\f{n}{b},\f{n}{b^2}} \,\cdots\,
              \{  L(\hat\a^{(k-2)})  \}_{\f{n}{b^{k-2}}}  \{  E  \}_{\f{n}{b^{k-2}},\f{n}{b^{k-1}}}
              \{  L(\hat\a^{(k-1)})  \}_{\f{n}{b^{k-1}}}  \{\v\}_b   \ei
$$
The latter formula allows us to compute $\{ \z \}_n$ efficiently.

Let us resume and count the operations required.
In the following, $n$ is equal to $b^k$ and $\vn$ denotes $\vn_{b-1}$.
\vskip0.2cm

First, for $j=0,\ldots,k-1$ we have to compute, by performing
{\large $\varphi_{\f{n}{b^j}}$} arithmetic operations, the vectors
$I^1_{\f{n}{b^j}} \hat \a^{(j)}$
and
$I^1_{ \f{n}{b^{j+1}} }\a^{(j+1)}$, i.e. scalars $\hat a^{(j)}_i$
and $a^{(j+1)}_i$ such that
$$
\bi{l}
\underbrace{
   \ba{ccccc}
     1         &           &    &       &  \\
     a_1^{(j)} & 1         &    &       &  \\
     a_2^{(j)} & a_1^{(j)} & 1  &       &  \\
     \cdot     &  \cdot    & \cdot & \cdot &  \\
    a_{\f{n}{b^j}-1}^{(j)} & \cdot & a_2^{(j)} & a_1^{(j)} & 1 \ea }
   \ba{c}
      1 \\
      \hat a_1^{(j)} \\
      \hat a_2^{(j)} \\
      \cdot       \\
      \hat a_{\f{n}{b^j}-1}^{(j)} \ea
  = \ba{c}
      1 \\
      \vn \\
      a_1^{(j+1)} \\
      \vn \\
      \cdot       \\
      a_{\f{n}{b^{j+1}}-1}^{(j+1)} \\
      \vn \ea,\ \ j=0,\ldots,k-1 \\
       \qquad \qquad\quad  \displaystyle{ \f{n}{b^j} \ \times\ \f{n}{b^j} } \ei
$$
(note that there is no $a^{(k)}_i$ to be computed).
\vskip0.1cm

\noindent
{\it Remark}. The
$\f{n}{b^j} \ \times\ \f{n}{b^j}$ l.t.T. by vector products, $j=0,\ldots,k-2$, that one has to perform in order to compute the $I^1_{ \f{n}{b^{j+1}} }\a^{(j+1)}$, can be in fact replaced with a number $b$ of $\f{n}{b^{j+1}}\times\f{n}{b^{j+1}}$ l.t.T. by vector products, $j=0,\ldots,k-2$.

\vskip0.2cm\noindent
Second, we have to compute the $b\times b$ l.t.T. by vector product
$\{L(\hat \a^{(k-1)})\}_{\f{n}{b^{k-1}}} [ v_0 \ \cdot\cdot\cdot \ v_{b-1}]^T$,
and $\f{n}{b^j}\times\f{n}{b^j}$ l.t.T. by vector products of type
$\{ L(\hat \a^{(j)}) \}_{\f{n}{b^j}}
[ 1\ \vn^T\ \bullet\ \vn^T \cdot\cdot\cdot \ \bullet\ \vn^T]^T$,
$j=k-2,\ldots,1,0$.
\vskip0.2cm

If we assume the cost of a $b^j\times b^j$ l.t.T. by vector product and $\varphi_{b^j}$ both bounded by $c b^{j}j$ where $c$ is a constant (we know that this is true at least for $b=2,3$), then the total cost of the above operations is smaller than $O(b^k k)=O(n\log_b n)$. In particular, if $v_0=1$, $v_i=0$, $i>0$, by such operations we obtain the first column of $A^{-1}$, or, in other words, a l.t.T. linear system solver of complexity $O(n\log_b n)$.
\vskip0.2cm

{\it Note}

Some of the contents of this work have been the subject of a communication held at the 2012--edition of the annual italian meeting ``Due Giorni di Algebra Lineare Numerica'' (Genova, 16--17 Febbraio 2012; speaker: Carmine Di Fiore). See www.dima.unige.it/$\sim$dibenede/2gg/home.html
\vskip0.2cm

{\it Acknowledgements}

Thanks to professor Wolf Gross who taught to the first author Bernoulli numbers and their beautiful properties, to professor Dario Bini who pointed us the problem of polynomial arithmetic related with the algorithms presented, and to the Rome-Moscow school 2012 which gave the authors the opportunity to teach, and then to write, in the present form, their studies on the numerical linear algebra of Bernoulli numbers.


\small

\end{document}